\newcommand{\real}{\mathbb{R}}
\newcommand{\dgh}{d_{\mathrm{GH}}}
\newcommand{\db}{d_\mathrm{b}}
\newcommand{\diam}{\mathrm{diam}}
\newcommand{\Barc}{\mathcal{BC}}
\newcommand{\vr}{\mathrm{VR}}
\newcommand{\F}[1][]{\mathcal{F}_{#1}}
\newcommand{\MF}[1][]{\mathcal{MF}_{#1}}
\newcommand{\reebradius}{\rho}
\newcommand{\reebgraph}{R}
\newcommand{\corrR}{\EuScript{R}}
\newcommand{\corrS}{\EuScript{S}}
\newtheorem*{theorem*}{Theorem}
\newcommand{\define}[1]{\textbf{#1}}
\title{Stability and Approximations for Decorated Reeb Spaces}
\authorrunning{Justin Curry, Washington Mio, Tom Needham, Osman Okutan, Florian Russold}
\author{Justin Curry}{University at Albany, State University of New York, USA }{jmcurry@albany.edu}{https://orcid.org/
0000-0003-2504-8388}{Supported by NSF CCF-1850052 and NASA 80GRC020C0016}
\author{Washington Mio}{Florida State University, Tallahassee, Florida}{wmio@fsu.edu}{
}{Supported by NSF DMS-1722995}
\author{Tom Needham}{Florida State University, Tallahassee, Florida}{tneedham@fsu.edu}{https://orcid.org/0000-0001-6165-3433}{Supported by NSF DMS-2107808 and DMS-2324962}
\author{Osman Berat Okutan}{Max Planck Institute for Mathematics in the Sciences, Leipzig, Germany}{osman.okutan@mis.mpg.de}{
}{}
\author{Florian Russold}{Graz University of Technology, Graz, Austria}{russold@tugraz.at}{}{Supported by the Austrian Science Fund (FWF): W1230}
\keywords{Reeb spaces, Gromov-Hausdorff distance, Persistent homology} 
\begin{document}

\maketitle

\begin{abstract}
 Given a map $f:X \to M$ from a topological space $X$ to a metric space $M$, a \emph{decorated Reeb space} consists of the Reeb space, together with an attribution function whose values recover geometric information lost during the construction of the Reeb space. 
 For example, when $M=\mathbb{R}$ is the real line, the Reeb space is the well-known Reeb graph, and the attributions may consist of persistence diagrams summarizing the level set topology of $f$. 
 In this paper, we introduce decorated Reeb spaces in various flavors and prove that our constructions are Gromov-Hausdorff stable. We also provide results on approximating decorated Reeb spaces from finite samples and leverage these to develop a computational framework for applying these constructions to point cloud data.
\end{abstract}

\section{Introduction}

Graphical summaries of topological spaces have a long history in pure mathematics \cite{kronrod1950functions,reeb1946points} and have more recently served as popular tools for shape and data analysis; prominent examples include Reeb graphs~\cite{ge2011data}, Mapper graphs~\cite{singh2007topological} and merge trees~\cite{morozov2013interleaving}. The related concept of persistent homology is another ubiquitous tool for topological summarization in modern data science~\cite{carlsson2014topological}. Graphical and persistence-based summaries frequently capture complementary information about a dataset, and there is a recently developing body of work which studies methods for enriching graphical descriptors with additional geometric or topological  data~\cite{curry2022decorated,curry2023convergence,curry2023topologically,li2023comparing,li2023flexible}.

In this paper we prove a Gromov-Hausdorff stability result for an enriched topological summary called the \define{Decorated Reeb Space}. The Reeb graph summarizes connectivity of level sets of a  function $f:X \to \real$ defined on some topological space $X$; the Reeb space generalizes the Reeb graph to summarize maps $f:X\to M$ valued in some metric space $M$---such considerations are natural when dealing with multi-variate data~\cite{edelsbrunner2008reeb}.
The enrichment presented in this paper synthesizes persistent homology with the Reeb space to provide a single topological descriptor more powerful than persistence or Reeb spaces alone.
Before summarizing the content of the paper, we first provide an illustration of the decorated Reeb space pipeline in order to shed light on the nature and importance of our results.

\begin{figure}
    \centering
    \includegraphics[width = \textwidth]{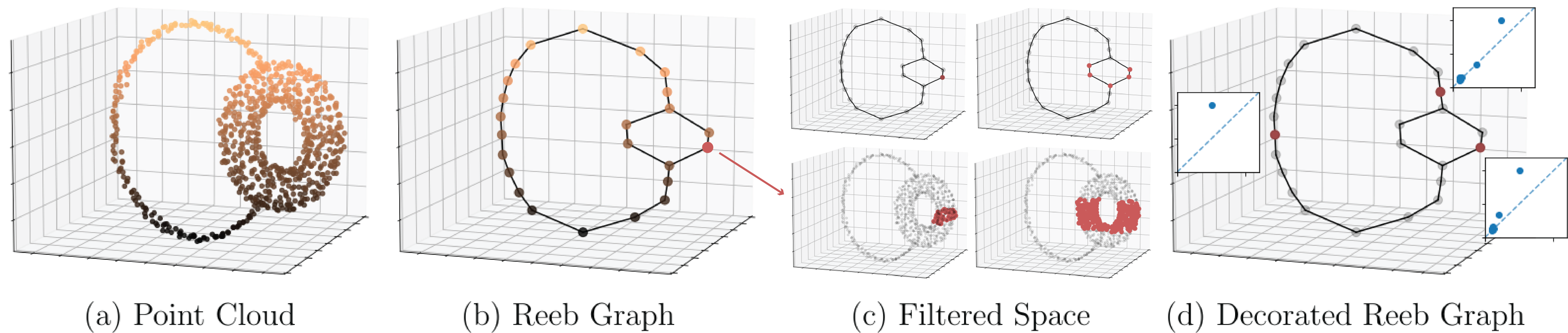}
    \caption{The Decorated Reeb Graph pipeline. (a) A {\bf point cloud}, noisily sampled from a space homotopy equivalent to $(S^1 \times S^1) \vee S^1$. (b) The (estimated) {\bf Reeb graph} associated to the point cloud, with respect to the height function. (c) To each node of the Reeb graph, we associate a {\bf filtered space}, which is a sequence of subsets of the original point cloud. (d) Taking persistent homology of the filtered space associated to each node yields a {\bf decorated Reeb graph}; each node is attributed with a persistence diagram.}
    \label{fig:pipeline}
\end{figure}

\subparagraph*{Illustration of the Pipeline.} Consider Figure \ref{fig:pipeline}. 
Part (a) shows a point cloud in $\real^3$, sampled (with additive noise) from a space that is homotopy equivalent to $(S^1 \times S^1)\vee S^1$. 
Using height along the $z$-axis as a function on the point cloud, we obtain an estimation of the Reeb graph of the space, as shown in (b). 
To each node of the Reeb graph, we assign a filtration of the original point cloud, as illustrated in (c). 
For a fixed anchor node in the Reeb graph (highlighted in red) and for each scale $r$ we include those points in the original point cloud that correspond to nodes in the Reeb graph within Reeb radius $r$ (see Definition \ref{defn:reeb_radius}) of the anchor node. 
The top row of (c) shows subsets of the Reeb graph within two different Reeb radii and the bottom row shows the original point clouds within those radii of the anchor node. 
To this filtration we obtain a bi-filtered simplicial complex by considering the Vietoris-Rips complex (with proximity parameter $s$) at each Reeb radius $r$. 
Taking a one-dimensional slice of the $(r,s)$-parameter space, we compute the (one-dimensional) persistent homology and add this as an attribution to the anchor node in the Reeb graph. 
By letting the anchor node vary across the Reeb graph, and repeating this process, we obtain a novel structure in TDA called the \define{Decorated Reeb Graph}. 
Part (d) of Figure \ref{fig:pipeline} shows the resulting persistence diagram for three different anchor nodes. 
We note that the persistent homology decoration at each node captures the local topology of the original point cloud, and this construction is more sensitive to topological features that exist near each anchor node. 
In our example, the attributions on the right side of the Reeb graph capture the topology of the torus (there are two off-diagonal points in the $H_1$ persistence diagram), whereas the attribution on the left hand side captures only the cycle coming from the larger circle.

\subparagraph*{Summary of Main Results.} The output signature of the Decorated Reeb Graph pipeline described above---that is, a graph whose nodes are attributed with persistence diagrams---is the same as that of the construction considered in \cite{curry2023topologically}. However, as noted in \cite[Remark 5.1]{curry2023topologically}, the computational pipeline in that paper is largely divorced from the theory, and the abstract stability results there have little relevance to the algorithmic implementation. 
This paper gives a novel theoretical construction of decorated Reeb spaces arising from functions $f:X \to M$ defined on connected spaces, borrowing tools from metric geometry. Our main contributions are:
\begin{itemize}
    \item We introduce the concept of the \define{Reeb radius} associated to a map $f:X \to M$ and show that this is closely tied to Reeb graph smoothings considered in~\cite{de2016categorified}; see Theorem \ref{thm:reeb-radius-smoothings}.
    \item We show that, under (quantifiable) tameness assumptions on the spaces and functions, if $f:X \to M$ and $g:Y \to M$ are close in a certain Gromov-Hausdorff sense, then so are their decorated Reeb spaces; see Theorem \ref{thm:barcode-decoration-stability}.
    \item We show that a tame function $f:X \to M$ can be approximated by a finite graph so that the (continuous) decorated Reeb space of $X$ is well approximated by the (discrete) decorated Reeb space of the graph; see Theorem \ref{thm:finite-approximation}. 
    \item Finally, we illustrate our constructions via computational examples in Section \ref{subsec:computation}.
\end{itemize}

\section{Decorated Reeb Spaces}\label{sec:decorated_reeb_spaces}

This section introduces the main constructions that we study throughout the paper. 
We begin with some preliminary definitions, which allows us to set conventions and notation.

\subsection{Reeb Spaces and Smoothings}

Let $X$ be a topological space, $(M,d_M)$ be a metric space, and $f:X \to M$ be a map. 
In this paper, every topological space is assumed to be compact, connected, and locally path connected, and every map is assumed to be continuous. We refer to the data $(X,f)$ as an \define{$M$-field}, and denote the class of all $M$-fields as $\F[M]$.

The \define{Reeb space} associated to $(X,f) \in \F[M]$ is defined to be the quotient space of $X$ under the equivalence relation $\sim$ on $X$ defined by $x \sim x'$ if and only if $x$ and $x'$ lie in the same connected component of a level set of $f$. 
This construction was independently conceived by Georges Reeb \cite{reeb1946points} and Aleksandr Kronrod \cite{kronrod1950functions}, but the attribution to Kronrod is frequently dropped. 
Keeping with this convention, we denote the Reeb space by $\reebgraph_f = X/\sim$ and the quotient map by $\pi_f: X \to \reebgraph_f$. 
For $x \in X$, we denote the equivalence class $\pi_f(x)$ by $[x]$. 
By definition, the map $f:X\to M$ factors through $\reebgraph_f$ to define an induced map $\bar{f}: \reebgraph_f \to M$, i.e., $\bar{f}([x])=f(x)$. 
When $M = \real$, the resulting quotient $\reebgraph_f$ is typically called the \define{Reeb graph}, although additional hypotheses are required to ensure a true graph structure \cite{sharko2006kronrod}. 
These assumptions are automatically satisfied in most computational settings and one should consult \cite{ge2011data,wang2021point} for a sample of the applications to data analysis.

Both the Reeb graph and Reeb space are sensitive to perturbations of the map $f:X\to M$.
One way of remedying this is to consider analogs of the smoothing operation first introduced in \cite{de2016categorified}.
Given $\epsilon>0$, the \define{$\epsilon$-smoothing} of the Reeb space $\reebgraph_f$,  denoted $R^{\epsilon}_f$, is defined in a two-step process.
First one constructs the space $T^\epsilon_f$ and map $f^\epsilon:T^\epsilon \to M$ defined by
\begin{equation}\label{eqn:f-epsilon}
 T^{\epsilon}_f:=\{(x,v) \in X \times M: d_M(f(x),v) \leq \epsilon\}, \qquad f^{\epsilon}(x,v) := v.   
\end{equation}
If $\reebgraph_{f^\epsilon}$ is the Reeb space of  $(T^\epsilon,f^\epsilon)$, then $R^{\epsilon}_f$ is the image of $X$ under the composition,
\begin{equation}\label{eqn:pi-epsilon}
 \pi_f^\epsilon : X\hookrightarrow T^{\epsilon}_f \to \reebgraph_{f^{\epsilon}} \quad \text{where} \quad x \mapsto (x,f(x)) \mapsto [(x,f(x))],   
\end{equation}
of the inclusion and quotient map.
We note that $\reebgraph_f^0$ can be identified with $\reebgraph_f$, so this smoothing operation produces a 1-parameter family of Reeb spaces starting with $\reebgraph_f$.

\subsection{Reeb Radius}

A key concept introduced in this paper is the  \emph{Reeb radius}, which contains all information about which points in $X$ are identified in the Reeb space and its associated smoothings. 
Later, we will use this to metrize our Reeb space and to induce decorations.

\begin{figure}
    \centering
    \includegraphics[width = .6\textwidth]{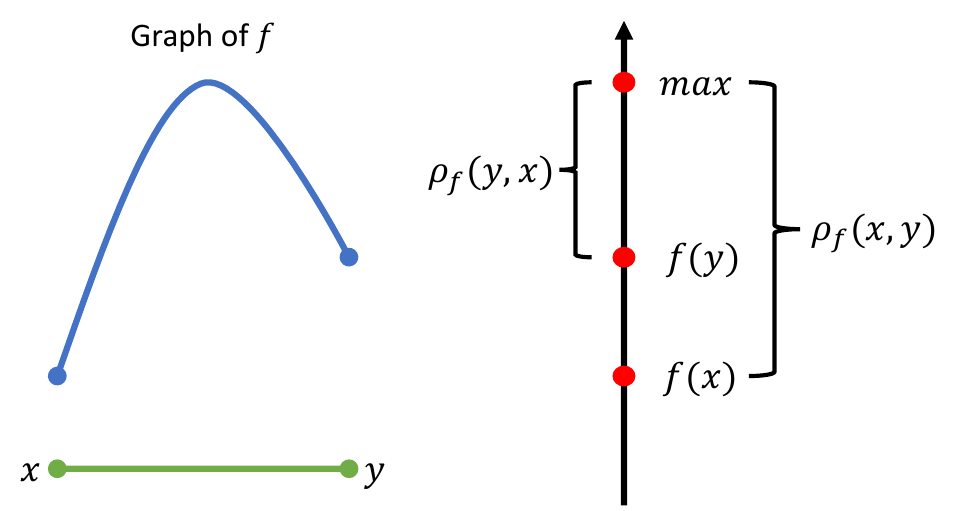}
    \caption{The graph of a generic function $f:[0,1]\to\mathbb{R}$ is its own Reeb graph. Notice that the Reeb radius is not symmetric in $x$ and $y$.}
    \label{fig:reeb-radius}
\end{figure}

\begin{definition}[Reeb radius]\label{defn:reeb_radius}
   For $(X,f) \in \F[M]$, define the \define{Reeb radius} $\reebradius_f: X \times X \to \real$ by
    \begin{equation}
        \reebradius_f(x,y) := \inf_\gamma \sup_t d_M(f(x),f(\gamma(t))),
    \end{equation}
    where the infimum is taken over all paths $\gamma:[0,1] \to X$ with $\gamma(0)=x$, $\gamma(1)=y$. 
\end{definition}

The Reeb radius is very similar to the \define{Reeb distance} of \cite{bauer2014measuring}. 
This is the map
\begin{equation}\label{eqn:reeb_distance}
\partial_f:X \times X \to \real, \qquad \partial_f(x,y) := \inf_\gamma \diam\big(f(\gamma([0,1]))\big),
\end{equation}
where the infimum is also taken over continuous paths from $x$ to $y$. 
It is straightforward to show that $\reebradius_f \leq \partial_f \leq 2\reebradius_f$, but one clear difference is that the Reeb radius is not necessarily symmetric; see Figure \ref{fig:reeb-radius} for an example.

We now show how the Reeb radius characterizes Reeb smoothings.

\begin{theorem}\label{thm:reeb-radius-smoothings}
    For $(X,f) \in \F[M]$ and $x, y \in X$, $\pi_f^\epsilon(x)=\pi_f^\epsilon(y)$ if and only if $f(x)=f(y)$ and $\reebradius_f(x,y) \leq \epsilon$.
\end{theorem}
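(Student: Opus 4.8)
The plan is to unwind the definition of $\pi_f^\epsilon$ and reduce the statement to a purely topological question about connected components of closed preimages, which can then be compared with the path-based Reeb radius. A level set of $f^\epsilon$ over a value $v \in M$ is $\{(x',v) : d_M(f(x'),v) \le \epsilon\}$, which projects homeomorphically onto the closed preimage $A_v^r := f^{-1}(\{w : d_M(v,w) \le r\})$ with $r=\epsilon$. Since $\pi_f^\epsilon(x)=[(x,f(x))]$, we have $\pi_f^\epsilon(x)=\pi_f^\epsilon(y)$ exactly when $(x,f(x))$ and $(y,f(y))$ lie in a common connected component of a single level set of $f^\epsilon$. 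This immediately forces $f(x)=f(y)=:v$, supplying one of the asserted conditions for free, and reduces the theorem to the claim that, with $f(x)=f(y)=v$, the points $x$ and $y$ lie in the same connected component of $A_v^\epsilon$ if and only if $\reebradius_f(x,y) \le \epsilon$.

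For the ``same component $\Rightarrow \reebradius_f \le \epsilon$'' half of this reduced claim, I would fix $\delta>0$ and pass to the open preimage $U_v^{\epsilon+\delta} := f^{-1}(\{w : d_M(v,w) < \epsilon+\delta\})$, which contains $A_v^\epsilon$. Being open in the locally path connected space $X$, the set $U_v^{\epsilon+\delta}$ is itself locally path connected, so its connected components coincide with its path components. The connected component of $A_v^\epsilon$ containing $x$ and $y$ is a connected subset of $U_v^{\epsilon+\delta}$, hence lies in one path component, yielding a path $\gamma$ from $x$ to $y$ inside $U_v^{\epsilon+\delta}$. Since $f(\gamma([0,1]))$ is a compact subset of the open ball of radius $\epsilon+\delta$, we get $\sup_t d_M(v,f(\gamma(t))) < \epsilon+\delta$, so $\reebradius_f(x,y) < \epsilon+\delta$; letting $\delta \to 0$ gives $\reebradius_f(x,y) \le \epsilon$.

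For the converse, assume $\reebradius_f(x,y)\le\epsilon$. For each $n$ the definition of the Reeb radius supplies a path from $x$ to $y$ whose $f$-image stays within $d_M$-distance $<\epsilon+1/n$ of $v$, and such a path lives in the compact set $A_v^{\epsilon+1/n}$; hence $x$ and $y$ lie in a common connected component of $A_v^{\epsilon+1/n}$ for every $n$. Since $\bigcap_n A_v^{\epsilon+1/n}=A_v^\epsilon$ and each $A_v^{\epsilon+1/n}$ is a compact subset of the compact Hausdorff space $X$, I would invoke the standard fact that in a nested sequence of compacta, if two points lie in a common connected component of every term then they lie in a common connected component of the intersection. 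This places $x$ and $y$ in the same connected component of $A_v^\epsilon$, closing the equivalence.

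The main obstacle is this last step: passing from connectivity in each enlarged set $A_v^{\epsilon+1/n}$ to connectivity in the exact set $A_v^\epsilon$. The difficulty is that a closed preimage such as $A_v^\epsilon$ need not be locally path connected, so one cannot directly extract a path inside it, and the limiting argument genuinely needs the compactness of $X$ (an assumption here) together with the coincidence of connected components and quasi-components in compact Hausdorff spaces; the cleanest route is to assume a clopen partition of $A_v^\epsilon$ separating $x$ from $y$, separate the two compact pieces by disjoint open sets using normality, and push the resulting separation down to some finite stage $A_v^{\epsilon+1/n}$ for a contradiction. By comparison, the ``$\Rightarrow$'' direction is comparatively routine, relying only on the fact that open preimages inherit local path connectedness from $X$.
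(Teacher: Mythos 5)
Your proposal is correct and takes essentially the same route as the paper: both reduce the theorem to the equivalence between $\reebradius_f(x,y)\le\epsilon$ and $x,y$ sharing a connected component of $f^{-1}(\mathbb{D}^\epsilon(v))$, prove one implication by passing to the path-connected component of an open $(\epsilon+\delta)$-preimage, and the other by intersecting a nested family of compact connected sets. The only difference is cosmetic: the paper cites \cite[Corollary~6.1.19]{engelking1989general} for the nested-intersection step whose standard proof you sketch.
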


\begin{lemma}\label{lem:reeb_radius_component}
    For $x,y \in X$, $y$ is contained in the connected component of $x$ in $f^{-1}(\mathbb{D}^r(f(x)))$ if and only if $\reebradius_f(x,y) \leq r$, where $\mathbb{D}^r(v)$ denotes the closed ball with radius $r$ and center $v$.
\end{lemma}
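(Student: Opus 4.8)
The plan is to prove both implications by relating the two quantities through a nested family of preimages of closed balls. For $\epsilon \ge 0$ write $K_\epsilon := f^{-1}(\mathbb{D}^{r+\epsilon}(f(x)))$, so that $K_0 = f^{-1}(\mathbb{D}^r(f(x)))$ is exactly the set in the statement. Each $K_\epsilon$ is closed in the compact space $X$, hence compact; the family is nested and decreasing in $\epsilon$; and since $\mathbb{D}^r(f(x)) = \bigcap_{\epsilon>0}\mathbb{D}^{r+\epsilon}(f(x))$ we have $\bigcap_{\epsilon>0}K_\epsilon = K_0$. The first thing I would record is the reformulation that $\reebradius_f(x,y)\le r$ holds precisely when, for every $\epsilon>0$, there is a path from $x$ to $y$ whose image under $f$ lies in the open ball $\{v : d_M(f(x),v)<r+\epsilon\}$. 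This is just unwinding the infimum in Definition~\ref{defn:reeb_radius}, and it makes explicit that the infimum defining $\reebradius_f$ need not be attained.

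For the forward direction (that $y$ in the connected component of $x$ in $K_0$ implies $\reebradius_f(x,y)\le r$) I would exploit the slack afforded by $\epsilon$. Fix $\epsilon>0$ and set $O_\epsilon := f^{-1}(\{v : d_M(f(x),v)<r+\epsilon\})$, an open subset of $X$ containing $K_0$. Since $X$ is locally path connected, so is the open set $O_\epsilon$, and therefore its connected components are open and path connected. Letting $C_0$ denote the connected component of $x$ in $K_0$, the set $C_0$ is connected and contained in $O_\epsilon$, hence lies inside the connected component $P_\epsilon$ of $x$ in $O_\epsilon$; in particular $y\in C_0\subseteq P_\epsilon$. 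As $P_\epsilon$ is path connected, there is a path from $x$ to $y$ inside $O_\epsilon$, which witnesses $\reebradius_f(x,y)\le r+\epsilon$. Letting $\epsilon\to 0$ yields $\reebradius_f(x,y)\le r$.

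For the reverse direction, suppose $\reebradius_f(x,y)\le r$. By the reformulation, for each $\epsilon>0$ there is a path from $x$ to $y$ lying entirely in $K_\epsilon$, so $y$ belongs to the connected component $C_\epsilon$ of $x$ in $K_\epsilon$. Each $C_\epsilon$ is compact (being closed in the compact set $K_\epsilon$), and, as components of the nested sets $K_\epsilon$, the $C_\epsilon$ are themselves nested and decreasing as $\epsilon\to 0$, each containing both $x$ and $y$. I would then invoke the standard fact that a nested intersection of compact connected subsets of a Hausdorff space is connected to conclude that $\bigcap_{\epsilon>0}C_\epsilon$ is connected; since it is contained in $\bigcap_{\epsilon>0}K_\epsilon=K_0$ and contains $x$ and $y$, it follows that $y$ lies in the connected component of $x$ in $K_0$.

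I expect the genuine obstacle to be the boundary case $\reebradius_f(x,y)=r$, where no single path need stay inside $K_0$: this is precisely where compactness is indispensable, through the nested compact-connected-intersection argument of the reverse direction. The only other point requiring care is the passage between \emph{connected} components (as in the statement) and \emph{paths} (as in the definition of $\reebradius_f$), which I handle cleanly in the forward direction by enlarging $K_0$ to the open set $O_\epsilon$ and using local path connectedness, so that connectedness and path connectedness agree there.
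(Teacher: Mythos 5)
Your proof is correct and follows essentially the same route as the paper's: the direction from the connected component to the bound on $\reebradius_f$ uses the open enlargements $O_\epsilon$ together with local path connectedness exactly as the paper does, and the reverse direction uses the same nested intersection of compact connected sets (the paper cites Engelking for the fact you call standard, and indexes by $\epsilon=1/n$ rather than a continuum, which is immaterial). No gaps; nothing further to add.
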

\begin{proof}
    Assume $\reebradius_f(x,y) \leq r$. 
    For each integer $n>0$, let $C_n$ denote the connected component of $x$ in $f^{-1}(\mathbb{D}^{r+1/n}(f(x)))$. 
    Note that $y \in C_n$ for all $n$. 
    The collection $\{C_n\}$ forms a decreasing family of compact connected sets. 
    By \cite[Corollary~6.1.19]{engelking1989general}, $C:=\cap C_n$ is connected and $x,y \in C \subseteq f^{-1}(\mathbb{D}^r(f(x)))$.

    Now, assume that $y$ is contained in the connected component of $x$ in $f^{-1}(\mathbb{D}^r(f(x)))$. For an integer $n>0$, let $U_n$ denote the connected component of $x$ in $f^{-1}(\mathbb{B}^{r+1/n}(f(x)))$, where $\mathbb{B}^{r+1/n}$ denotes the \emph{open} ball with radius $r+1/n$. 
    We have $y \in U_n$, which is path connected, since it is open, connected, and $X$ is locally path connected. 
    Thus, there exists a path in $U_n$ from $x$ to $y$, so that $\reebradius_f(x,y) \leq r + 1/n$. 
    Since $n>0$ was arbitrary, we have $\reebradius_f(x,y) \leq r$.
\end{proof}

\begin{proof}[Proof of Theorem \ref{thm:reeb-radius-smoothings}]
    Let $x \in X$ and $f(x)=v$. 
    By Equation \eqref{eqn:f-epsilon} we have that
    \[
    (f^\epsilon)^{-1}(v)=\{(x',v):d_M(f(x),v) \leq \epsilon \}=f^{-1}(\mathbb{D}^\epsilon(v)) \times \{v\},
    \] 
    where we continue to use $\mathbb{D}^\epsilon$ for a closed $\epsilon$-ball in $M$. Hence, by Equation \eqref{eqn:pi-epsilon}, $\pi_f^\epsilon(x)=\pi_f^\epsilon(y)$ if and only if $f(y)=f(x)$ and $y$ is in the connected component of $x$ in $f^{-1}(\mathbb{D}^\epsilon(f(x)))$. This happens if and only if $\reebradius_f(x,y) \leq \epsilon$ by Lemma \ref{lem:reeb_radius_component}.
\end{proof}

\subsection{Metrizing the Reeb Space}

One of our main goals is to establish Gromov-Hausdorff stability of our constructions. In order to formulate these results, we first need to metrize the Reeb space $\reebgraph_f$. 
We do this using the Reeb radius, but first we need to establish some of its properties. 

\begin{proposition}\label{prop:reeb_radius_properties}
    Let $(X,f) \in \F[M]$. For all $x,y,z \in X$, we have:
    \begin{enumerate}[i)]
        \item $\reebradius_f(x,z) \leq \reebradius_f(x,y)+\reebradius_f(y,z)$.
        \item $\reebradius_f(x,y)=0$ if and only if $[x]=[y]$.
        \item If $[x]=[x']$ and $[y]=[y']$, then $\reebradius_f(x,y)=\reebradius_f(x',y')$.
    \end{enumerate}
\end{proposition}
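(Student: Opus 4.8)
The plan is to prove the three items in the order stated, establishing the triangle inequality (i) first from scratch, then deducing (ii) as an immediate specialization of Lemma \ref{lem:reeb_radius_component}, and finally obtaining (iii) by combining (i) and (ii).

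For the triangle inequality (i), I would fix $\epsilon > 0$ and choose nearly optimal paths: a path $\gamma_1$ from $x$ to $y$ with $\sup_t d_M(f(x),f(\gamma_1(t))) \leq \reebradius_f(x,y) + \epsilon$, and a path $\gamma_2$ from $y$ to $z$ with $\sup_t d_M(f(y),f(\gamma_2(t))) \leq \reebradius_f(y,z) + \epsilon$. Concatenating these gives a continuous path $\gamma$ from $x$ to $z$, and it remains to bound $\sup_t d_M(f(x),f(\gamma(t)))$. On the first segment this quantity is controlled directly by $\reebradius_f(x,y) + \epsilon$. The \emph{main obstacle} is the second segment: because the Reeb radius is anchored at (and measured from) the image of the starting point, the relevant distance there is $d_M(f(x),f(\gamma_2(t)))$ rather than $d_M(f(y),f(\gamma_2(t)))$. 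To convert between the two anchors I would apply the triangle inequality in $M$, $d_M(f(x),f(\gamma_2(t))) \leq d_M(f(x),f(y)) + d_M(f(y),f(\gamma_2(t)))$, combined with the key observation that $d_M(f(x),f(y)) \leq \reebradius_f(x,y)$ (evaluate the defining supremum at $t=1$ along any path and infimize). This bounds the second segment by $\reebradius_f(x,y) + \reebradius_f(y,z) + \epsilon$, so the supremum over all of $\gamma$ is bounded by the same quantity; since $\gamma$ is one admissible path from $x$ to $z$, taking the infimum and letting $\epsilon \to 0$ yields (i).

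For (ii), I would simply specialize Lemma \ref{lem:reeb_radius_component} to $r = 0$. Since $\mathbb{D}^0(f(x)) = \{f(x)\}$, the lemma states that $\reebradius_f(x,y) \leq 0$—equivalently $\reebradius_f(x,y) = 0$, as the Reeb radius is a nonnegative infimum of suprema of distances—if and only if $y$ lies in the connected component of $x$ inside the level set $f^{-1}(f(x))$. This last condition is precisely the definition of $x \sim y$, i.e. $[x] = [y]$, so (ii) follows with no further work.

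For (iii), I would combine the first two parts. Because $\sim$ is symmetric, the hypotheses $[x] = [x']$ and $[y] = [y']$ give, via (ii), that $\reebradius_f(x,x') = \reebradius_f(x',x) = 0$ and $\reebradius_f(y,y') = \reebradius_f(y',y) = 0$. Applying the triangle inequality (i) twice, $\reebradius_f(x,y) \leq \reebradius_f(x,x') + \reebradius_f(x',y') + \reebradius_f(y',y) = \reebradius_f(x',y')$ and symmetrically $\reebradius_f(x',y') \leq \reebradius_f(x',x) + \reebradius_f(x,y) + \reebradius_f(y,y') = \reebradius_f(x,y)$, which together force the equality $\reebradius_f(x,y) = \reebradius_f(x',y')$. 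I expect parts (ii) and (iii) to be routine once (i) is in hand, so essentially all the care goes into the anchor-shift step of the triangle inequality.
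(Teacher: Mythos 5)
Your proof is correct and follows essentially the same route as the paper: part (i) is proved by concatenating paths and shifting the anchor via $d_M(f(x),f(y)) \leq \sup_t d_M(f(x),f(\alpha(t)))$, part (ii) reduces to the $r=0$ case of Lemma \ref{lem:reeb_radius_component} (the paper routes this through Theorem \ref{thm:reeb-radius-smoothings}, which is itself an immediate consequence of that lemma), and part (iii) is the identical two-applications-of-the-triangle-inequality argument.
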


\begin{remark}
    With these properties, $\reebradius_f$ induces a well defined \emph{quasimetric} on $\reebgraph_f$, meaning that it only lacks the symmetry axiom of a metric; recall Figure \ref{fig:reeb-radius}.
\end{remark}

\begin{proof}
\begin{enumerate}[i)]
    \item Given paths $\alpha$ from $x$ to $y$ and $\beta$ from $y$ to $z$, let $\gamma$ denote the path from $x$ to $z$ obtained by concatenating $\alpha$ and $\beta$. We have
    \begin{equation*}
        \begin{split}
            \reebradius_f(x,z) &\leq \sup_t d_M(f(x),f(\gamma(t))) = \max\left(\sup_t d_M(f(x),f(\alpha(t))),\sup_t d_M(f(x),f(\beta(t)))\right) \\
            &\leq \max\left(\sup_t d_M(f(x),f(\alpha(t))),d_M(f(x)+f(y))+\sup_t d_M(f(y),f(\beta(t)))\right) \\
            &\leq \sup_t d_M(f(x),f(\alpha(t)))+\sup_t d_M(f(y),f(\beta(t))).
        \end{split}
    \end{equation*}
    Since this holds for arbitrary paths $\alpha$ and $\beta$, the triangle inequality follows.
    \item This is the $\epsilon = 0$ case of Theorem \ref{thm:reeb-radius-smoothings}.

    \item Using part {\bf i)}, $\reebradius_f(x,y) \leq \reebradius_f(x,x')+\reebradius_f(x',y')+\reebradius_f(y',y)=\reebradius_f(x',y')$. Repeating this argument with the roles of $x,y$ and $x',y'$ reversed proves $\reebradius_f(x',y') \leq \reebradius_f(x,y)$.
\end{enumerate}
\end{proof} 

We now obtain a metric on $\reebgraph_f$ by symmetrizing the Reeb radius.

\begin{definition}\label{def:reeb_distance}
    For $(X,f) \in \F[M]$, we define $d_f: \reebgraph_f \times \reebgraph_f \to [0,\infty)$ by 
    \[
    d_f([x],[y]):=2\max(\reebradius_f(x,y),\reebradius_f(y,x)).
    \]
\end{definition}

As already remarked, it is not hard to show that $d_f$ is bi-Lipschitz equivalent to the metric induced by the Reeb distance (Equation \eqref{eqn:reeb_distance}), but they are not equal in general.

\subsection{Decorations}\label{sec:decorations}

We wish to endow the Reeb space $\reebgraph_f$ of a map $f:X \to M$ with an additional function $\reebgraph_f \to Y$, for some (perhaps extended- and/or pseudo-) metric space $Y$; this function will be our attribution or \define{decoration} function. 
The metric space $Y$ will typically be the space of all (multi-parameter) persistence modules or barcodes; the decoration will capture topological information about $X$ that is lost in the quotient $\reebgraph_f$. 

In more detail, recall that Topological Data Analysis (TDA) typically views a function $f:X \to \real$ as a stepping stone to defining a \define{filtration}---an increasing chain of closed subsets that exhausts $X$.
As such, an $\real$-field $(X,f)$ can be equivalently regarded as a \define{filtered space}---a space along with its exhaustion $\{f^{-1}(-\infty,t]\}_{t\in \real}$.
In this sense, our first decoration function assigns a filtration to each point of the Reeb space. 

\begin{definition}[Filtration decoration for Reeb Spaces]\label{def:filter_decoration}
    Let $(X,f) \in \F[M]$. The \define{filtration decoration} for $\reebgraph_f$ is the map $D_f:\reebgraph_f \to \F[\real]$ defined by
    \[
    D_f([x]) := (X,\reebradius_f(x,\cdot)),
    \]
    where $\reebradius_f$ is the Reeb radius; see Figure \ref{fig:pipeline}(c) for an example.
\end{definition}

Filtration or filtered space decorations are somewhat unwieldy to work with computationally, but these structures can be summarized using persistent homology. To this end, we assume the reader is familiar with the basic notions of persistent homology: the \define{Vietoris-Rips complex}, the \define{persistent homology barcode}, and the \define{bottleneck distance} between barcodes; see \cite{carlsson2014topological} for a reminder. In the following, we consider a \define{simplicial filtration}; in general, this is a family $\{K^r\}_{r \in P}$ of simplicial complexes indexed by a poset $(P,\leq)$ such that $K^r$ is a subcomplex of $K^s$ whenever $r \leq s$. In this paper, we consider \define{1-parameter} and \define{2-parameter simplicial filtrations}, where $P = \real_{\geq 0}$ and $P=\real^2_{\geq 0}$ (with its product poset structure), respectively. In particular, we use $\{\mathrm{VR}^r(Y)\}_{r \geq 0}$ for the Vietoris-Rips complex of a metric space $Y$. We now introduce our main computational object of study, the barcode decoration, which is an attribution valued in $\Barc$---the space of barcodes with the bottleneck distance.

\begin{definition}[Barcode decoration for Reeb Spaces]\label{def:barcode_decoration}
    Let $(X,f) \in \F[M]$ and further assume that $X$ is a metric space. 
    Let $\lambda, c \geq 0$ and let $k$ be a non-negative integer. 
    The \define{barcode decoration} for $\reebgraph_f,\lambda, c, k$ is the map $B_{f,k}^{\lambda, c}:\reebgraph_f \to \Barc$, where $B^{\lambda, c}_{f,k}([x])$ is the $k$-dimensional persistent homology barcode of the Vietoris-Rips $1$-parameter simplicial filtration
    \[
    \big\{\vr^r(\{y \in X: \reebradius_f(x,y) \leq \lambda r + c \}) \big\}_{r \geq 0}.
    \]
\end{definition}

See Figures \ref{fig:pipeline} and \ref{fig:camel_examples} for examples.

\begin{remark}
    In fact, we should only consider $B_{f,k}^{\lambda, c}([x])$ to be a \emph{persistence module}, as the tameness conditions for the guaranteed existence of a barcode decomposition~\cite{crawley2015decomposition} may not hold. With a view toward computation, we will abuse terminology and still refer to the decorations as barcodes, but our results hold when decorations are considered instead as persistence modules and the interleaving distance is used in place of bottleneck distance.
\end{remark}

\section{Gromov-Hausdorff Stability}

In this section, we establish stability of the decorated Reeb space constructions defined above.

\subsection{Metric Fields and Multiscale Comparisons}

As Definition \ref{def:barcode_decoration} intimates, filtration-attributed Reeb spaces can be compared using the Vietoris-Rips filtration if and only if $X$ has a metric.
This will be essential as we move from a connected topological space $X$ to a point-sampling of $X$ along with its map $f:X\to M$.
To emphasize this extra structure, we call the pair $(X,f)$ an \define{$M$-metric field} when $X$ has a metric (otherwise, $f:X\to M$ is just an $M$-field).
Recall that the collection of all $M$-fields is denoted $\F[M]$. By contrast, we will denote the collection of all $M$-metric fields by $\MF[M]$. Methods for metrizing $\MF[M]$ can be found in \cite{anbouhi2023metrics}, but here we will focus on developing variations on the Gromov-Hausdorff distance to compare metric fields.

Recall that a \define{correspondence} between sets $X$ and $Y$ is a subset $\corrR \subset X \times Y$ such that the coordinate projections are surjective when restricted to $\corrR$.

\begin{definition}[(r,s)-Correspondences between metric fields]
    Let $(X,f), (Y,g) \in \MF[M]$, let $\corrR$ be a correspondence between $X,Y$, and let $r,s \geq 0$. 
    We call $\corrR$ an \define{$(r,s)$-correspondence} between $(X,f)$ and $(Y,g)$ if for all $(x,y)$, $(x',y') \in \corrR$ we have
    \[
    |d_X(x,x')-d_Y(y,y')| \leq 2r \qquad \text{and} \qquad d_M(f(x),g(y)) \leq s. 
    \]
\end{definition}

We use the following metrization of $\MF[M]$ in our stability results below.

\begin{definition}\label{def:field_GH}
    The \define{Gromov-Hausdorff distance} between $M$-metric fields $(X,f)$ and $(Y,g)$, denoted $\dgh((X,f),(Y,g))$, is defined to be the infimum over $r$ such that there is an $(r,r)$-correspondence between $(X,f)$ and $(Y,g)$.
\end{definition}

When $M$ is a single point, the above definition reduces to the regular Gromov-Hausdorff distance between metric spaces. The case $M = \real$ reduces to the definition given in \cite[Definition~2.4]{chazal2009gromov}. 
This definition gives a pseudometric on metric fields.

The filtration decoration of Definition \ref{def:filter_decoration} produces a metric field valued in metric fields. That is, for $(X,f) \in \MF[M]$ and $[x] \in \reebgraph_f$, the filtration decoration $D_f([x]) = (X,\reebradius_f(x,\cdot))$ is naturally an $\real$-metric field, i.e., $D_f$ is a map $D_f:\reebgraph_f \to \MF[\real]$. The class $\MF[\real]$ can be endowed with the GH pseudometric above, so that $(X,D_f) \in \mathcal{MF}_{\mathcal{MF}_{\real}}$ (where we consider $D_f$ as a function on $X$ by composing with the Reeb quotient map).
The following definition shows how we can compare these decorations.

\begin{definition}[(r,s,t)-Correspondences between metric field-valued functions]
    Let $(M,d_M)$ be a metric space. 
    Let $(X,d_X)$ and $(Y,d_Y)$ be metric spaces endowed with functions $D_X: X \to \MF[M]$ and $D_Y: Y \to \MF[M]$---that is, $(X,D_X),(Y,D_Y) \in \mathcal{MF}_{\mathcal{MF}_M}$. 
    Let $\corrR$ be a correspondence between $X$ and $Y$. 
    We call $\corrR$ an \define{$(r,s,t)$-correspondence} between $(X,D_X)$ and $(Y,D_Y)$ if, for all $(x,y)$, $(x',y') \in \corrR$,
    $|d_X(x,x')-d_Y(y,y')| \leq 2r$ and $\corrR$ is an $(s,t)$-correspondence between $D_X(x)$ and $D_X(y)$.

    We define a variant of the \define{Gromov-Hausdorff distance} between $(X,D_X)$ and $(Y,D_Y)$, denoted $\hat{d}_\mathrm{GH}((X,D_X),(Y,D_Y))$, to be the infimum over $r$ such that there is an $(r,r,r)$-correspondence between them.
\end{definition}

\subsection{Stability of Filtration Decorated Reeb Spaces}

To study the stability properties of the decorated Reeb space construction, we need to restrict ourselves to a certain subclass of $M$-metric fields.

\begin{definition}[$(L,\epsilon)$-Connectivity]
    We say $(X,f) \in \MF[M]$ has \define{$(L,\epsilon)$-connectivity} if for all $x,y \in X$, $\reebradius_f(x,y) \leq L d_X(x,y)+2\epsilon$. We denote the subspace of $\MF[M]$ consisting of $M$-metric fields with $(L,\epsilon)$-connectivity by $\MF[M]^{L,\epsilon}$.
\end{definition}

\begin{example}
    Suppose $(X,f)\in \MF[M]$ satisfies $\sup_{x \in X} d_M(f(x),g(x)) \leq \epsilon$ for some $L$-Lipschitz $g:X \to M$.
    If $X$ is a geodesic space, then $f$ has $(L,\epsilon)$-connectivity. 
    Indeed, let $\gamma$ be a geodesic in $X$ from $x$ to $y$. 
    Then $d_M(f(x),f(\gamma(t))) \leq d_M(g(x),g(\gamma(t)))+2\epsilon \leq Ld_X(x,y)+2\epsilon$. This implies that $\reebradius_f(x,y) \leq Ld_X(x,y)+2\epsilon$.
\end{example}


\begin{theorem}[Stability of Decorated Reeb Spaces]\label{thm:gh-stability}
    Let $(X,f),(Y,g) \in \MF[M]^{L,\epsilon}$. Let $\corrR$ be an $(r,s)$-correspondence between $(X,f)$ and $(Y,g)$ and let $\corrS$ be the induced correspondence between the Reeb spaces $\reebgraph_f$ and $\reebgraph_g$ given by $([x],[y]) \in \corrS$ whenever $(x,y) \in \corrR$. 
    Then, $\corrS$ is a $(2Lr+2s+2\epsilon,r,2Lr+2s+2\epsilon)$-correspondence between $(\reebgraph_f,D_f)$ and $(\reebgraph_g,D_g)$.
\end{theorem}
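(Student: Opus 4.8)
The plan is to unwind the definition of an $(r',s',t')$-correspondence with $r' = t' = 2Lr+2s+2\epsilon$ and $s' = r$, and collapse the whole statement onto a single comparison estimate for the Reeb radius. To show $\corrS$ is a $(2Lr+2s+2\epsilon,\, r,\, 2Lr+2s+2\epsilon)$-correspondence I must check three things for all pairs in $\corrS$: \emph{(a)} the Reeb-space metrics satisfy $|d_f([x],[x']) - d_g([y],[y'])| \le 2(2Lr+2s+2\epsilon)$; \emph{(b)} the decoration fields $D_f([x]) = (X,\reebradius_f(x,\cdot))$ and $D_g([y]) = (Y,\reebradius_g(y,\cdot))$ have underlying metrics distorted by at most $2r$ under $\corrR$; and \emph{(c)} the Reeb-radius functions satisfy $|\reebradius_f(x,a) - \reebradius_g(y,b)| \le 2Lr+2s+2\epsilon$ for all $(a,b)\in\corrR$. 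Item \emph{(b)} is immediate, since it is literally the first defining inequality of the $(r,s)$-correspondence $\corrR$. And because $d_f([x],[x']) = 2\max(\reebradius_f(x,x'),\reebradius_f(x',x))$ together with $|\max(p,q)-\max(p',q')| \le \max(|p-p'|,|q-q'|)$, item \emph{(a)} drops out of \emph{(c)} applied to the two pairs $(x,y),(x',y')$ and $(x',y'),(x,y)$. So the entire theorem reduces to estimate \emph{(c)}.

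To prove \emph{(c)} I would fix $(x,y),(a,b)\in\corrR$ and set $\rho_0 := \reebradius_f(x,a)$. Since both fields lie in $\MF[M]^{L,\epsilon}$ and the transpose of an $(r,s)$-correspondence is again an $(r,s)$-correspondence, it suffices by symmetry to show $\reebradius_g(y,b) \le \rho_0 + 2Lr+2s+2\epsilon$. Choose a near-optimal path $\gamma$ in $X$ from $x$ to $a$ with $\sup_t d_M(f(x),f(\gamma(t))) \le \rho_0+\delta$. Because a correspondence supplies no continuous lift, I would discretize: take a fine partition $x = p_0,\dots,p_n = a$ along $\gamma$, use surjectivity of $\corrR$ to pick $q_i\in Y$ with $(p_i,q_i)\in\corrR$, arranging $q_0 = y$ and $q_n = b$, then join consecutive $q_i,q_{i+1}$ by a path $\beta_i$ in $Y$ nearly realizing $\reebradius_g(q_i,q_{i+1})$, and let $\beta$ be the concatenation from $y$ to $b$.

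The estimate rests on two ingredients. First, $(L,\epsilon)$-connectivity of $(Y,g)$ and the metric-distortion bound of $\corrR$ give $\reebradius_g(q_i,q_{i+1}) \le L\,d_Y(q_i,q_{i+1}) + 2\epsilon \le L\,d_X(p_i,p_{i+1}) + 2Lr + 2\epsilon$, so along $\beta_i$ every point stays within $L\,d_X(p_i,p_{i+1}) + 2Lr + 2\epsilon$ of $g(q_i)$ in $M$. Second, the two $s$-bounds $d_M(f(x),g(y))\le s$ and $d_M(f(p_i),g(q_i))\le s$, combined with $d_M(f(x),f(p_i)) \le \rho_0+\delta$ since $p_i$ lies on $\gamma$, yield $d_M(g(y),g(q_i)) \le \rho_0+\delta+2s$. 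A triangle inequality through $g(q_i)$ at each point of $\beta_i$, followed by the supremum over $\beta$, gives $\sup_t d_M(g(y),g(\beta(t))) \le \rho_0 + \delta + 2s + 2Lr + 2\epsilon + \max_i L\,d_X(p_i,p_{i+1})$ up to path slack; refining the partition sends $\max_i L\,d_X(p_i,p_{i+1})\to 0$, and letting all slack go to zero gives $\reebradius_g(y,b)\le \rho_0 + 2Lr+2s+2\epsilon$.

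The main obstacle is precisely this discretization, and the delicate point is ensuring that the additive $2Lr+2\epsilon$ error from $(L,\epsilon)$-connectivity does \emph{not} accumulate across the $n$ segments. This is why I would measure each intermediate point against the single basepoint $g(y)$---bounding $d_M(g(y),g(q_i))$ uniformly by $\rho_0+2s$---rather than chaining triangle inequalities through all the $g(q_i)$; the per-segment connectivity error is then absorbed by a supremum instead of a sum, and appears only once. Confirming that the partition can be refined enough to annihilate the $L\,d_X(p_i,p_{i+1})$ term uses only uniform continuity of $\gamma$ on $[0,1]$, while existence of the connecting paths $\beta_i$ uses local path-connectivity of $Y$.
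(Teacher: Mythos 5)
Your proposal is correct and follows essentially the same route as the paper: both reduce the theorem to the single estimate $|\reebradius_f(x,a)-\reebradius_g(y,b)|\le 2Lr+2s+2\epsilon$, and both prove it by discretizing a near-optimal path, transferring the sample points through $\corrR$, bridging consecutive points with $(L,\epsilon)$-connectivity, and measuring every bridge against the fixed basepoint so the $2Lr+2\epsilon$ error is absorbed by a supremum rather than summed (the paper runs the argument from a path in $Y$ to one in $X$, you run the mirror image, but this is immaterial by symmetry of the correspondence). The only cosmetic difference is that you make the reduction to the Reeb-radius estimate and the single-basepoint trick explicit, which the paper leaves implicit in its final paragraph.
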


\begin{corollary}\label{cor:gh-stability}
    Let $(X,f),(Y,g) \in \MF[M]^{L,\epsilon}$. Then,
    \[ \hat{d}_\mathrm{GH}((\reebgraph_f,D_f),(\reebgraph_g,D_g)) \leq 2(L+1) \dgh((X,f),(Y,g)) + 2\epsilon. \]
\end{corollary}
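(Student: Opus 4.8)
The plan is to derive the corollary directly from Theorem~\ref{thm:gh-stability} by specializing the correspondence parameters and then unwinding the two Gromov--Hausdorff distances as infima over admissible radii. The genuine work has already been carried out in the theorem, so this is essentially a bookkeeping argument.

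First I would abbreviate $\rho := \dgh((X,f),(Y,g))$ and fix an arbitrary $\delta > 0$. By Definition~\ref{def:field_GH}, $\rho$ is the infimum over those $r$ admitting an $(r,r)$-correspondence between $(X,f)$ and $(Y,g)$, so there is some $r < \rho + \delta$ together with an $(r,r)$-correspondence $\corrR$. I would feed this $\corrR$ into Theorem~\ref{thm:gh-stability} with the choice $s = r$. The theorem then guarantees that the induced correspondence $\corrS$ on the Reeb spaces is a $(2Lr + 2s + 2\epsilon,\, r,\, 2Lr + 2s + 2\epsilon)$-correspondence between $(\reebgraph_f,D_f)$ and $(\reebgraph_g,D_g)$; substituting $s = r$ this reads as an $(a,\, r,\, a)$-correspondence, where $a := 2(L+1)r + 2\epsilon$.

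The only subtlety is that $\hat{d}_\mathrm{GH}$ is defined via $(\hat r, \hat r, \hat r)$-correspondences with all three parameters \emph{equal}, whereas we have produced one whose middle parameter $r$ differs from the outer parameter $a$. Here I would observe that the conditions defining an $(r,s,t)$-correspondence are monotone: relaxing any of the three parameters upward only weakens the required inequalities, so an $(a,r,a)$-correspondence is automatically an $(a',s',t')$-correspondence for every $a' \geq a$, $s' \geq r$, $t' \geq a$. Since $a = 2(L+1)r + 2\epsilon \geq 2r \geq r$, I may take all three parameters equal to $a$. Thus $\corrS$ is an $(a,a,a)$-correspondence, and by the definition of $\hat{d}_\mathrm{GH}$ we obtain $\hat{d}_\mathrm{GH}((\reebgraph_f,D_f),(\reebgraph_g,D_g)) \leq a = 2(L+1)r + 2\epsilon$.

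Finally I would let the slack shrink: combining this bound with $r < \rho + \delta$ gives $\hat{d}_\mathrm{GH} \leq 2(L+1)(\rho + \delta) + 2\epsilon$, and since $\delta > 0$ was arbitrary, sending $\delta \to 0$ yields the claimed inequality. I do not expect any real obstacle in this corollary; the one point requiring care is the monotonicity step reconciling the unequal parameters produced by Theorem~\ref{thm:gh-stability} with the equal-parameter form demanded by $\hat{d}_\mathrm{GH}$, for which verifying $a \geq r$ is exactly what makes the infimum pass cleanly to the stated bound.
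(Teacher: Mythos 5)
Your proposal is correct and matches the intended derivation: the paper leaves the corollary's proof implicit as an immediate consequence of Theorem~\ref{thm:gh-stability}, obtained exactly as you describe by taking a near-optimal $(r,r)$-correspondence, applying the theorem with $s=r$, and using the monotonicity of the correspondence parameters together with $2(L+1)r+2\epsilon \geq r$ before passing to the infimum. No gaps.
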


\begin{proof}[Proof of Theorem \ref{thm:gh-stability}]
    Let $(x,y), (x',y') \in \corrR$. Let $\gamma:[0,1] \to Y$ be a path from $y$ to $y'$. For every $\delta > 0$ there is a partition 
    $0 = t_0 < t_1 < \dots < t_n =1$ 
    such that $\diam(g(\gamma([t_{i},t_{i+1}]))) \leq \delta$ for all $i$. 
    Let $y_i = \gamma(t_i)$ and $(x_i,y_i) \in \corrR$ such that $x_0=x$ and $x_n=x'$. Note that
    \[
    \reebradius_f(x_i,x_{i+1}) \leq L d_f(x_i,x_{i+1}) + \epsilon \leq L(2r+\delta)+2\epsilon, \qquad \forall i. 
    \]
    By definition of $\reebradius_f$, there exist paths $\alpha_i$ from $x_i$ to $x_{i+1}$ such that $\reebradius_f(x_i, \alpha_i(t)) \leq L(2r+\delta)+2\epsilon$ for all $t$. Hence, we have
    \begin{align*}
        d_M(f(x),f(\alpha_i(t))) &\leq d_M(f(x),f(x_i))+\reebradius_f(x_i, \alpha_i(t)) \\
        &\leq \sup_t d_M(g(x),g(\gamma(t))) + L(2r+\delta)+2s+2\epsilon.
    \end{align*}
    By concatenating $\alpha_i$'s, we get a path from $x$ to $x'$. Hence, the inequality above implies 
    \[\reebradius_f(x,x') \leq \sup_t d_M(g(x),g(\gamma(t))) + L(2r+\delta)+2s+2\epsilon. \]
    Since $\delta>0$ and $\gamma$ were arbitrary, we get $\reebradius_f(x,x') \leq \reebradius_g(y,y') + 2Lr+2s+2\epsilon$. One can similarly show that $\reebradius_g(y,y') \leq \reebradius_f(x,x') + 2Lr+2s+2\epsilon.$ Therefore $|\reebradius_f(x,x')-\reebradius_f(y,y')| \leq 2Lr + 2s +2\epsilon$.

    This implies that the metric distortion of $\corrS$ between $\reebgraph_f$ and $\reebgraph_g$ is less than or equal to $(4Lr+4s+4\epsilon)$, and $\corrR$ is an $(r, 2Lr+2s+2\epsilon)$-correspondence between $(X,D_f([x]))$ and $(Y,D_g([y]))$ for all $(x,y) \in \corrR$. Combining these two, we see that $\corrS$ is an $(2Lr+2s+2\epsilon,r,2Lr+2s+2\epsilon)$-correspondence between $(\reebgraph_f,D_f)$ and $(\reebgraph_g,D_g)$.
\end{proof}

\subsection{Stability of Barcode Decorated Reeb Spaces}

To establish the stability of barcode decorations (Definition \ref{def:barcode_decoration}), let us analyze the stability of the intermediate steps going from a metric filtration to a barcode.

\begin{definition}
    For any $\real$-metric field $f: X \to \real$, we have a two-parameter simplicial filtration $\{S_f^{t,u}\}_{t,u \geq 0}$ defined by $S_f^{t,u}:=\vr^t(\{x \in X \mid f(x) \leq u\})$.
\end{definition}

The one parameter version of this, where $t=u$, was shown to be stable in \cite{chazal2009gromov}. 
To establish a multi-parameter stability result, we utilize a multiparameter version of the homotopy interleaving distance (in the sense of \cite{frosini2019persistent} and \emph{not} in the sense of \cite{blumberg2023universality}) below.

\begin{definition}[cf.~\cite{frosini2019persistent}]
    Given two 2-parameter simplicial filtrations $S^{*,*}$ and $T^{*,*}$, an $(r,s)$-\define{homotopy interleaving} between them is a pair of 2-parameter families of simplicial maps $\phi^{t,u}: S^{t,u} \to T^{t+r,u+s}$ and  $\psi^{t,u}: T^{t,u} \to S^{t+r,s+u}$ that commute with the structure maps, and whose composition is homotopy equivalent to the inclusion maps $S^{*,*}\hookrightarrow S^{*+2r,*+2s}$  and $T^{*,*}\hookrightarrow T^{*+2r,*+2s}$. 
\end{definition}
We adapt the standard contiguity proof of Vietoris-Rips stability \cite{chazal2014persistence} to this multi-parameter setting.
\begin{proposition}\label{prop:simplicial-stability}
    Let $f: X \to \real$ and $g: Y \to \real$ be $\real$-metric fields and let $\corrR$ be an $(r,s)$-correspondence between them. Then $S_f^{*,*}$ and $S_g^{*,*}$ are $(2r,s)$-homotopy interleaved.
\end{proposition}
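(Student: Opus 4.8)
The plan is to adapt the standard contiguity proof of Vietoris--Rips stability \cite{chazal2014persistence} to the two-parameter setting, constructing the interleaving maps directly from the correspondence $\corrR$. First I would use surjectivity of the two projections to fix \emph{global} vertex maps $\varphi: X \to Y$ and $\psi: Y \to X$ satisfying $(x,\varphi(x)) \in \corrR$ and $(\psi(y),y) \in \corrR$ for all $x \in X$, $y \in Y$. These will induce both families of simplicial maps; since the vertex maps are chosen once and for all, independently of $t$ and $u$, their commutativity with the structure maps---which are inclusions acting as the identity on vertices---is automatic, disposing of the commutativity requirement in the definition of a homotopy interleaving.

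The first substantive step is to verify that $\varphi$ induces a simplicial map $S_f^{t,u} \to S_g^{t+2r,\,u+s}$ (and symmetrically for $\psi$), which establishes the $(2r,s)$ shift. Given a simplex $\{x_0,\dots,x_k\} \in S_f^{t,u}$, I must check that its image $\{\varphi(x_0),\dots,\varphi(x_k)\}$ satisfies $g(\varphi(x_i)) \leq u+s$ and $d_Y(\varphi(x_i),\varphi(x_j)) \leq t+2r$ for all $i,j$. The first follows from the function bound $|f(x_i)-g(\varphi(x_i))| \leq s$ built into the definition of an $(r,s)$-correspondence, which shifts the sublevel threshold by $s$; the second follows from the distortion bound $|d_X(x_i,x_j)-d_Y(\varphi(x_i),\varphi(x_j))| \leq 2r$, which shifts the Vietoris--Rips scale by $2r$. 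This is precisely why the scale shift is $2r$ rather than $r$.

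Next I would establish the interleaving condition by showing each composite is homotopic to an inclusion via contiguity. For the composite $\psi \circ \varphi : S_f^{t,u} \to S_f^{t+4r,\,u+2s}$ and the inclusion $\iota$ into the same complex, I would show that for every simplex $\sigma = \{x_0,\dots,x_k\} \in S_f^{t,u}$ the union $\sigma \cup (\psi\circ\varphi)(\sigma)$ is again a simplex of $S_f^{t+4r,\,u+2s}$; since contiguous simplicial maps are homotopic, this yields the required homotopy. The function-value check is immediate: chaining two correspondence inequalities gives $|f(\psi(\varphi(x_i)))-f(x_i)| \leq 2s$. The distance bounds require handling three kinds of pairs---$(x_i,x_j)$, $(\psi(\varphi(x_i)),\psi(\varphi(x_j)))$, and the cross pairs $(x_i,\psi(\varphi(x_j)))$---and in each case two applications of the distortion inequality yield a bound of $d_X(x_i,x_j)+4r \leq t+4r$. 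The cross term is the only one that is not immediate and is the crux of the pointwise computation. The symmetric argument for $\varphi \circ \psi$ handles the other composite.

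The step I expect to require the most care is not any individual inequality but the \emph{compatibility} of the contiguity homotopies across the two-parameter poset, since the definition of a homotopy interleaving asks for more than unrelated pointwise homotopies. Here the globality of the vertex maps pays off: the contiguity homotopy between $\psi\circ\varphi$ and $\iota$ is the canonical straight-line homotopy on geometric realizations, and because $\varphi,\psi$ do not depend on $(t,u)$, these homotopies commute with the filtration inclusions and assemble into the natural family demanded by a $(2r,s)$-homotopy interleaving. Verifying this naturality---rather than grinding through the distance estimates---is the genuinely new content in passing from the single-parameter statement of \cite{chazal2009gromov} to the multi-parameter result.
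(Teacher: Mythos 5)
Your proposal is correct and follows essentially the same route as the paper's proof: fix vertex maps $F:X\to Y$ and $G:Y\to X$ from the correspondence, check that the distortion bound $2r$ and the function bound $s$ give simplicial maps $S_f^{t,u}\to S_g^{t+2r,u+s}$, and conclude via the contiguity argument of the standard Vietoris--Rips stability proof that the composites are homotopic to the inclusions. The paper states this tersely and cites the contiguity argument; you simply spell out the pointwise estimates and the (correct) observation that globally chosen vertex maps make commutativity with the structure maps automatic.
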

\begin{proof}
    Let $F: X \to Y$ and $G: Y \to X$ be functions such that $(x,F(x))$, $(G(y),y) \in \corrR$. Note that $F$ induces a simplicial map from $S_f^{t,u}$ to $S_g^{t+2r,u+s}$, and similarly $G$ induces a simplicial map. As in \cite{chazal2014persistence}, composition of these maps are contiguous to the inclusion, hence $S_f^{*,*}$ and $S_g^{*,*}$ are $(2r,s)$-homotopy interleaved. 
\end{proof}

As we already defined $(r,s)$-homotopy interleavings of simplicial filtrations, we can, by analogy, also define $(r,s)$-interleavings of persistence modules \cite{lesnick2015theory} by simply replacing each simplicial complex with its homology. 
Homotopy invariance of homology \cite{munkres2018elements} implies that if two simplicial filtrations are $(r,s)$-homotopy interleaved, then their persistent homology modules are $(r,s)$-interleaved. 
Assuming our modules are point-wise finite dimensional \cite{crawley2015decomposition}, we can induce barcodes by taking $1$-parameter slices of a $2$-parameter module, as in~\cite{landi2018rank}.

\begin{definition}
    Given a two parameter persistence module $P^{*,*}$ and $\lambda, c \geq 0$, we denote by $B_P^{\lambda, c}$  the persistence barcode of the $1$-parameter filtration $\{P^{t,\lambda t + c}\}_{t \geq 0}$. 
\end{definition}

The following result is an adaptation of \cite[Lemma~1]{landi2018rank} to our multiparameter setting. 

\begin{proposition}\label{prop:barcode-stability}
    Let $P^{*,*}$ and $Q^{*,*}$ be $(r,s)$-interleaved persistence modules. Given $\lambda,c \geq 0$,
    \[\db(B_P^{\lambda,c},B_Q^{\lambda,c}) \leq \max(r,s/\lambda), \]
    where $\db$ denotes the bottleneck distance.
\end{proposition}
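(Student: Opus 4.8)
The plan is to reduce the claim to a one-parameter interleaving statement and then invoke the isometry theorem for one-parameter persistence modules, which identifies bottleneck distance with interleaving distance. Write $A^t := P^{t,\lambda t + c}$ and $B^t := Q^{t,\lambda t + c}$ for the one-parameter modules whose barcodes are $B_P^{\lambda,c}$ and $B_Q^{\lambda,c}$, and set $\epsilon := \max(r,s/\lambda)$. I will show that $A^*$ and $B^*$ are $\epsilon$-interleaved as one-parameter modules; since the modules are pointwise finite dimensional, the isometry theorem \cite{lesnick2015theory} then yields $\db(B_P^{\lambda,c},B_Q^{\lambda,c}) \leq \epsilon$, which is exactly the assertion. (When $\lambda = 0$ the stated bound is $\infty$ unless $s=0$, so it holds trivially; I therefore assume $\lambda > 0$.)

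The heart of the argument is constructing the slice interleaving maps from the given $(r,s)$-interleaving maps $\phi^{t,u}: P^{t,u} \to Q^{t+r,u+s}$ and $\psi^{t,u}: Q^{t,u} \to P^{t+r,u+s}$. Evaluating $\phi$ along the slice line carries $A^t = P^{t,\lambda t + c}$ into $Q^{t+r,\lambda t + c + s}$. The key observation is that this target index lies coordinatewise below the shifted slice point $(t+\epsilon,\lambda(t+\epsilon)+c)$: one has $t+r \leq t+\epsilon$ since $r \leq \epsilon$, and $\lambda t + c + s \leq \lambda t + c + \lambda\epsilon$ since $s \leq \lambda\epsilon$, both guaranteed by the choice $\epsilon = \max(r,s/\lambda)$. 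Post-composing with the internal structure map of $Q$ thus produces a map $\Phi^t: A^t \to B^{t+\epsilon}$, and symmetrically $\psi$ produces $\Psi^t: B^t \to A^{t+\epsilon}$.

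It then remains to verify that $(\Phi^*,\Psi^*)$ is an $\epsilon$-interleaving. Commutativity of the $\Phi^t$ and $\Psi^t$ with the one-parameter structure maps of $A$ and $B$ is immediate, because both the interleaving maps $\phi,\psi$ and the post-composed internal maps commute with all structure maps of $P$ and $Q$. For the interleaving identity, the composite $\Psi^{t+\epsilon}\circ\Phi^t$ unravels as $\psi$ applied after $\phi$ with an intervening internal map of $Q$; using that $\psi$ commutes with structure maps, this intervening map can be reabsorbed so that the composite becomes $\psi^{t+r,\lambda t + c + s}\circ\phi^{t,\lambda t + c}$ followed by internal maps of $P$. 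By the $(r,s)$-interleaving relation, $\psi^{t+r,\lambda t + c + s}\circ\phi^{t,\lambda t + c}$ is itself an internal map of $P$, so the whole composite collapses to the internal map $P^{t,\lambda t + c} \to P^{t+2\epsilon,\lambda(t+2\epsilon)+c}$, which is precisely the slice structure map $A^t \to A^{t+2\epsilon}$. The symmetric computation handles $\Phi^{t+\epsilon}\circ\Psi^t$, completing the interleaving.

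I expect the main obstacle to be exactly this last bookkeeping step: one must track the two pairs of indices carefully and repeatedly invoke commutativity of the interleaving maps with structure maps in order to reabsorb the auxiliary internal maps introduced when projecting back onto the slice line. Once that commutation is handled cleanly, the coordinate inequalities $r \leq \epsilon$ and $s \leq \lambda\epsilon$ do all the remaining work, and the isometry theorem converts the interleaving bound into the desired bottleneck bound.
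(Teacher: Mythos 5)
Your proof is correct and follows essentially the same route as the paper's: slice both modules along the line $u=\lambda t+c$, use the inequalities $t+r\leq t+\epsilon$ and $\lambda t+c+s\leq\lambda(t+\epsilon)+c$ with $\epsilon=\max(r,s/\lambda)$ to turn the $(r,s)$-interleaving into an $\epsilon$-interleaving of the slices, and then pass to bottleneck distance. You supply more detail than the paper does on verifying the interleaving identities and on the degenerate case $\lambda=0$, but the underlying argument is identical.
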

\begin{proof}
    Let $\epsilon=\max(r,s/\lambda)$. It is enough to show that the $1$-parameter filtrations $V^*:=\{P^{t,\lambda t + c}\}_{t \geq 0}$ and $W^*:=\{Q^{t,\lambda t + c}\}_{t \geq 0}$ are $\epsilon$-interleaved. 
    Let $(\phi,\psi)$ be an $(r,s)$-interleaving between $P$ and $Q$. 
    Note that $t+r \leq t+\epsilon$ and $\lambda t + c + s \leq \lambda(t+\epsilon) + c$. 
    Hence, composing $\phi$ with the structure maps of $Q$, we get a map from $V^t \to W^{t+\epsilon}$ for all $t \geq 0$:
    \[\begin{tikzcd}
        V^t=P^{t,\lambda t + c} \arrow[r, "\phi"]  &  Q^{t+r, \lambda t + c + s} \arrow[r]& Q^{t+\epsilon,\lambda(t+\epsilon)+c} = W^{t+\epsilon}
    \end{tikzcd}\]
    Similarly we get maps from $W^*$ to $V^{*+\epsilon}$. Since $(\phi,\psi)$ is an interleaving, $V^*,W^*$ are $\epsilon$-interleaved.
\end{proof}

With the notation established above, for $[x] \in \reebgraph_f$, we have $B^{\lambda,c}_{f,k}([x]):= B^{\lambda,c}_{PH_k(S_{D_f([x])})}$.

\begin{theorem}\label{thm:barcode-decoration-stability}
For $(X,f),(Y,g) \in \MF[M]^{L,\epsilon}$, we have
\[\dgh((\reebgraph_f,B^{\lambda,c}_{f,k}),(\reebgraph_g,B^{\lambda,c}_{g,k})) \leq 2\max (1,1/\lambda) \big((L+1)\dgh((X,f),(Y,g))+\epsilon \big). \]
\end{theorem}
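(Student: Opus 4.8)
The plan is to assemble the result by chaining the stability lemmas for each stage of the barcode decoration pipeline while carefully tracking interleaving parameters. Observe that $(\reebgraph_f, B^{\lambda,c}_{f,k})$ is a $\Barc$-metric field, so by Definition \ref{def:field_GH} it suffices to exhibit, for each $(r,r)$-correspondence $\corrR$ between $(X,f)$ and $(Y,g)$, an induced correspondence $\corrS$ between the decorated Reeb spaces that is a $(\rho,\rho)$-correspondence with $\rho = 2\max(1,1/\lambda)((L+1)r+\epsilon)$. Taking the infimum over all such $r$, i.e.\ letting $r \to \dgh((X,f),(Y,g))$, then yields the stated bound.

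First I would invoke Theorem \ref{thm:gh-stability} with $s=r$: the induced correspondence $\corrS$ (matching $[x]$ with $[y]$ whenever $(x,y)\in\corrR$) is an $(\alpha,r,\alpha)$-correspondence between $(\reebgraph_f,D_f)$ and $(\reebgraph_g,D_g)$, where I abbreviate $\alpha := 2(L+1)r+2\epsilon$. Unpacking this yields the two facts I will use: the Reeb metrics satisfy $|d_f([x],[x'])-d_g([y],[y'])|\leq 2\alpha$ for all matched pairs, and for each fixed $(x,y)\in\corrR$ the same correspondence $\corrR$ is an $(r,\alpha)$-correspondence between the $\real$-metric fields $D_f([x])=(X,\reebradius_f(x,\cdot))$ and $D_g([y])=(Y,\reebradius_g(y,\cdot))$.

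Next I would propagate this closeness of the filtration decorations through the three-stage passage from a metric field to a barcode. Applying Proposition \ref{prop:simplicial-stability} to the $(r,\alpha)$-correspondence shows that the two-parameter simplicial filtrations $S_{D_f([x])}^{*,*}$ and $S_{D_g([y])}^{*,*}$ are $(2r,\alpha)$-homotopy interleaved; homotopy invariance of homology then makes their $k$-th persistent homology modules $(2r,\alpha)$-interleaved; finally Proposition \ref{prop:barcode-stability} gives $\db(B^{\lambda,c}_{f,k}([x]),B^{\lambda,c}_{g,k}([y]))\leq \max(2r,\alpha/\lambda)$.

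It remains to combine the distortion and decoration bounds. Since $L\geq 0$ we have $\alpha\geq 2r$, so $\max(2r,\alpha/\lambda)\leq \max(\alpha,\alpha/\lambda)=\alpha\max(1,1/\lambda)$, and this same quantity dominates the metric-distortion constant $\alpha$. Hence $\corrS$ is a $(\rho,\rho)$-correspondence with $\rho=\alpha\max(1,1/\lambda)=2\max(1,1/\lambda)((L+1)r+\epsilon)$, and passing to the infimum over $r$ completes the argument. I expect the only delicate point to be the bookkeeping of interleaving parameters across the three propositions—in particular, confirming that the single base correspondence $\corrR$ simultaneously serves as the comparison map for every pair of filtration decorations, so that Proposition \ref{prop:simplicial-stability} legitimately applies with the parameters inherited from Theorem \ref{thm:gh-stability}, and that the $\max(1,1/\lambda)$ factor arises precisely from the slicing step in Proposition \ref{prop:barcode-stability} when $\lambda<1$.
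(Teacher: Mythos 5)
Your proposal is correct and follows essentially the same route as the paper: chain Theorem~\ref{thm:gh-stability}, Proposition~\ref{prop:simplicial-stability}, and Proposition~\ref{prop:barcode-stability} on an $(r,r)$-correspondence, then observe that $\max\bigl(2r,\ (2L+2)r+2\epsilon,\ \tfrac{(2L+2)r+2\epsilon}{\lambda}\bigr)=2\max(1,1/\lambda)((L+1)r+\epsilon)$. Your write-up is in fact more explicit than the paper's about the intermediate interleaving parameters, but the argument is identical in substance.
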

\begin{proof}
    Let $\corrR$ be an $(r,r)$-correspondence between $(X,f)$ and $(Y,g)$. By Theorem \ref{thm:gh-stability}, Proposition \ref{prop:simplicial-stability} and Proposition \ref{prop:barcode-stability}, $\corrR$ is an 
    $((2L+2)r+2\epsilon,\max(2r,\frac{(2L+2)r+2\epsilon}{\lambda}))$-correspondence between $(\reebgraph_f,B^{\lambda,c}_{f,k})$, $(\reebgraph_g,B^{\lambda,c}_{g,k})$. The maximum of $(2L+2)r+2\epsilon$ and $\frac{(2L+2)r+2\epsilon}{\lambda}$ is equal to $2\max(1,1/\lambda)((L+1)r+\epsilon)$. 
\end{proof}

\begin{remark}
    In the case that $M$ is a point, $\reebgraph_f,\reebgraph_g$ become singletons, $L,\epsilon$ can be taken to be zero, and $\lambda, c$ do not effect the barcode assigned to the single point, which becomes the barcode of the persistent homology of the Vietoris-Rips filtration of the corresponding metric space. So, taking $\lambda=1$, Theorem \ref{thm:barcode-decoration-stability} reduces to the classical result on the stability of Vietoris-Rips barcodes \cite{chazal2009gromov}.
\end{remark}

\section{Decorated Reeb Spaces for Combinatorial Graphs}

Having established the stability of the decorated Reeb space in the continuous setting, we now show how we can reliably approximate it in the finite setting. 
First, notice that the concept of Reeb radius makes sense for finite combinatorial graphs, where paths in a topological space are replaced by edge paths in a graph. 
More precisely, given a simple finite graph $G=(V,E)$ and a function $g: V \to M$ to a metric space $M$, we define $\reebradius_g: V \times V \to \real$ by
\[\reebradius_g(v,w):= \min \left\{\max_{i=1,\dots,n}d_M(g(v),g(v_i)) \mid v=v_0,\dots,v_n=w, \, [v_i,v_{i+1}] \in E \, \, \forall \, i \right\}. \] 
The Reeb space $\reebgraph_g$, metric $d_g$, and decorations $D_g$ and $B^{\lambda,c}_{g,k}$ are defined similarly.

\subsection{An Algorithm for the Reeb Radius Function}

Here we present an efficient algorithm for computing the Reeb radius function of a discrete graph by modifying Dijkstra's shortest path algorithm. 
See Figure \ref{fig:reeb_radius} for an example. A routine proof of the correctness and complexity of this algorithm is relegated to the appendix.

\begin{algorithm}
\caption{Input: Connected graph $G=(V,E)$, $g: V \to M$, $x \in V$.  Output: $\reebradius_g(x, \cdot): V \to \real$ }\label{alg:reeb_radius}
\begin{algorithmic}
\For {$v$ in $V$}:
    \State $\reebradius_g(x,v) \gets \infty$
\EndFor
\State $\reebradius_g(x,x) \gets 0$ 
\State $Q \gets [x]$

\While{$Q$ is non-empty}:
    \State $v \gets\underset{w \in Q}{\text{argmin }}\reebradius_g(x,w)$

    \State $Q \gets Q \setminus \{v\}$

    \For {$w$ in the set of neighbours of $v$}:
        \If {$\reebradius_g(x,w)=\infty$}:
          \State $\reebradius_g(x,w) \gets \max(\reebradius_g(x,v),d_M(g(x),g(w)))$
          \State $Q \gets Q\cup\{w\}$
        \EndIf
    \EndFor
\EndWhile

\State \textbf{return} $\reebradius_g(x,\cdot)$
\end{algorithmic}
\end{algorithm}

\begin{figure}[ht]
    \begin{center}
        \begin{tabular}{ccc}
            \includegraphics[width=0.3\linewidth]{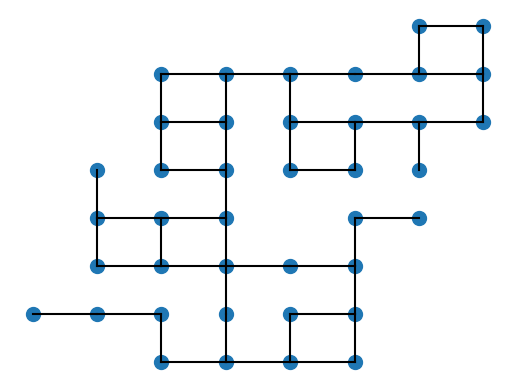} &
            \includegraphics[width=0.3\linewidth]{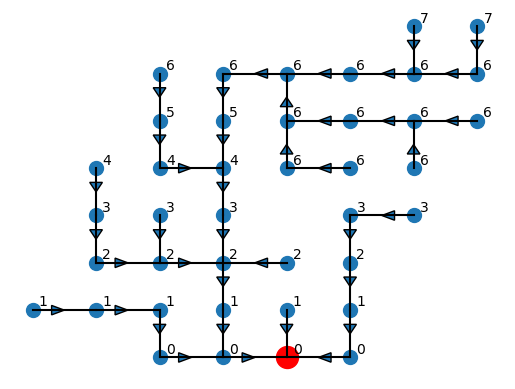} &
            \includegraphics[width=0.3\linewidth]{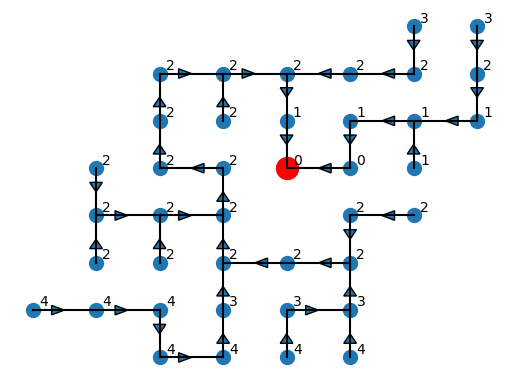}
        \end{tabular}
        \caption{On the left is a graph endowed with an $\real$-valued function $g$ given by node height. The other two graphs are the spanning trees generated by Algorithm \ref{alg:reeb_radius} with respect to the roote node $x$, with a directed edge $[w,v]$ for each $v,w$ that is processed inside the conditional statement. At each node, the value of $\reebradius_g(x,v)$ is written. The directed path from $v$ to $x$ realizes $\reebradius_g(x,v)$.}
        \label{fig:reeb_radius}
    \end{center}
\end{figure}

\subsection{Finite Approximations of Decorated Reeb Spaces}

The following result shows that, with a carefully constructed graph, we can approximate the Reeb radius function of a metric field.

\begin{proposition}\label{prop:graph-reeb-radius}
    Let $(X,f) \in \MF[M]^{L,\epsilon}$, let $\delta>0$, and let $V \subset X$ be a finite subspace such that for all $x \in X$, there exists $v \in V$ such that $d_X(x,v) < \delta$ and $d_M(f(x),f(v)) \leq \delta$. Let $G$ be the simple metric graph with node set $V$ and the edges given by $[v,w] \in E$ if $d_X(v,w) \leq 3\delta$. Let $g: V \to M$ be the restriction of $f$ to $V$. Then, for all $v,w \in V$, 
    \[|\reebradius_f(v,w)-\reebradius_g(v,w)| \leq (3L+1)\delta + \epsilon. \]
\end{proposition}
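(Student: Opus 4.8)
The plan is to sandwich the two quantities against each other from both sides: I will show $\reebradius_g(v,w)\le \reebradius_f(v,w)+\delta$ by \emph{discretizing} a near-optimal continuous path, and $\reebradius_f(v,w)\le \reebradius_g(v,w)+3L\delta+2\epsilon$ by \emph{continuously realizing} an optimal edge path via $(L,\epsilon)$-connectivity. Taking the larger of the two slacks then yields a two-sided bound, from which the claimed $(3L+1)\delta+\epsilon$ follows once the additive constants are reconciled.

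For the first inequality, fix $\eta>0$ and pick a continuous path $\gamma:[0,1]\to X$ from $v$ to $w$ with $\sup_t d_M(f(v),f(\gamma(t)))\le \reebradius_f(v,w)+\eta$. By compactness of $[0,1]$ and uniform continuity of $\gamma$, choose a partition $0=t_0<\dots<t_m=1$ with $d_X(\gamma(t_j),\gamma(t_{j+1}))<\delta$. Using the hypothesis on $V$, snap each interior sample $\gamma(t_j)$ to a vertex $u_j\in V$ with $d_X(\gamma(t_j),u_j)<\delta$ and $d_M(f(\gamma(t_j)),f(u_j))\le\delta$, and set $u_0=v$, $u_m=w$. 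The triangle inequality gives $d_X(u_j,u_{j+1})<3\delta$, so each $[u_j,u_{j+1}]$ is an edge of $G$, while $d_M(g(v),g(u_j))\le d_M(f(v),f(\gamma(t_j)))+\delta\le \reebradius_f(v,w)+\eta+\delta$. This edge path thus witnesses $\reebradius_g(v,w)\le\reebradius_f(v,w)+\delta$ after letting $\eta\to0$.

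For the second inequality, take an optimal edge path $v=v_0,\dots,v_n=w$ realizing $\reebradius_g(v,w)=\max_i d_M(f(v),f(v_i))$. Each edge satisfies $d_X(v_i,v_{i+1})\le 3\delta$, so $(L,\epsilon)$-connectivity gives $\reebradius_f(v_i,v_{i+1})\le 3L\delta+2\epsilon$; equivalently, by Lemma \ref{lem:reeb_radius_component}, $v_{i+1}$ lies in the component of $v_i$ in $f^{-1}(\mathbb{D}^{3L\delta+2\epsilon}(f(v_i)))$. Picking continuous paths $\alpha_i$ from $v_i$ to $v_{i+1}$ with $\sup_t d_M(f(v_i),f(\alpha_i(t)))$ arbitrarily close to $3L\delta+2\epsilon$ and concatenating them yields a path $\gamma$ from $v$ to $w$; for any point $\alpha_i(t)$ along it, $d_M(f(v),f(\alpha_i(t)))\le d_M(f(v),f(v_i))+d_M(f(v_i),f(\alpha_i(t)))\le \reebradius_g(v,w)+3L\delta+2\epsilon$. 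Taking the global supremum gives $\reebradius_f(v,w)\le\reebradius_g(v,w)+3L\delta+2\epsilon$.

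I expect the continuous-realization direction to be the main obstacle. The delicate point is that the global $f$-deviation of the concatenated path is controlled by a \emph{single} $\reebradius_g(v,w)$ term plus \emph{one} edgewise connectivity slack, rather than an accumulation across all $n$ edges; this works precisely because every $f$-value is compared to the fixed basepoint $f(v)$ before the supremum is taken. The remaining work is purely in the additive constants: the naive estimate delivers $3L\delta+2\epsilon$, and landing exactly on the stated $(3L+1)\delta+\epsilon$ requires care in trading an $\epsilon$ against the net density $\delta$ (e.g.\ exploiting that consecutive vertices are $\delta$-close in $f$-value rather than invoking the full connectivity slack). Once this bookkeeping is settled, combining the two inequalities closes the proof.
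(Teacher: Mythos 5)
Your proof follows the paper's argument essentially step for step: the first direction discretizes a near-optimal continuous path by snapping samples to the $\delta$-net (giving $\reebradius_g(v,w)\le\reebradius_f(v,w)+\delta$), and the second direction realizes each edge of an optimal edge path by a continuous path supplied by $(L,\epsilon)$-connectivity, then concatenates and compares every $f$-value to the fixed basepoint $f(v)$. Both directions are carried out correctly. The one point you leave open---reconciling your $3L\delta+2\epsilon$ with the stated $3L\delta+\epsilon$---is not something you should expect to fix by a cleverer argument: since $(L,\epsilon)$-connectivity is defined by $\reebradius_f(x,y)\le L\,d_X(x,y)+2\epsilon$, an edge with $d_X(v_i,v_{i+1})\le 3\delta$ yields exactly $\reebradius_f(v_i,v_{i+1})\le 3L\delta+2\epsilon$, which is what you obtain; the paper's own proof records this quantity as $3L\delta+\epsilon$, apparently dropping a factor of $2$ on the $\epsilon$. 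Your proposed rescue (that consecutive vertices of an edge path are $\delta$-close in $f$-value) is not available from the hypotheses: the net condition controls $d_M(f(x),f(v))$ only for a point and its nearest net representative, not for two adjacent vertices of $G$. So the bound you can honestly certify is $|\reebradius_f(v,w)-\reebradius_g(v,w)|\le\max(\delta,\,3L\delta+2\epsilon)\le(3L+1)\delta+2\epsilon$, which differs from the statement by a single additive $\epsilon$ traceable to the paper's proof rather than to any gap in yours.
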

\begin{proof}
    Let $\gamma:[0,1] \to X$ be a path from $v$ to $w$. Take $0=t_0<\dots<t_n=1$ such that $d_X(\gamma(t_i),\gamma(t_{i+1})) \leq \delta$ for all $i$. Let $x_i = \gamma(t_i)$, and $v_i \in V$ such that $v_0=v$, $v_n=w$, $d_X(x_i,v_i) \leq \delta$, and $d_X(f(x_i),f(v_i)) \leq \delta$. Then $d_X(v_i,v_{i+1}) \leq 3\delta$, which means that $(v_0,\dots,v_n)$ is an edge path in $G$. We have
    \[\max_i d_M(g(v),g(v_i)) \leq \sup_t d_M(f(v),f(\gamma(t))) + \delta. \]
    Infimizing over $\gamma$, we get $\reebradius_g(v,w) \leq \reebradius_f(v,w)+\delta$.

    Let $(v=v_0,\dots,v_n=w)$ be an edge path in $G$. We have $\reebradius_f(v_i,v_{i+1}) \leq 3L\delta+\epsilon$. Hence, there is a path $\alpha_i$ from $x_i$ to $x_{i+1}$ in $X$ such that for all $x$ over the image of $\alpha_i$, we have
    \[d_M(f(v),f(x)) \leq d_M(f(v),f(x_i))+\reebradius_f(x_i,x) \leq d_M(f(v),f(x_i)) + 3L\delta+\epsilon.\]
    By concatenating $\alpha_i$'s, we get a path from $v$ to $w$ in $X$, hence
    \[\reebradius_f(v,w) \leq  \max_i d_M(f(v),f(x_i)) + 3L\delta+\epsilon.\]
    Infimizing over edge paths, we get $\reebradius_f(v,w) \leq \reebradius_g(v,w)+ 3L\delta+\epsilon.$
\end{proof}

\begin{theorem}\label{thm:finite-approximation}
    Let $(X,f)$ and $G=(V,E)$ with $g: V \to M$ be as in Proposition \ref{prop:graph-reeb-radius}. Then there is a $((5L+1)\delta+3\epsilon, \delta, (5L+1)\delta+3\epsilon)$-correspondence between $(\reebgraph_f,D_f)$ and $(\reebgraph_g,D_g)$.
\end{theorem}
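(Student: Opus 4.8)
The plan is to manufacture the required correspondence directly from the $\delta$-sample $V$, and then reduce every clause of the $(r,s,t)$-correspondence definition to a single inequality comparing the continuous Reeb radius $\reebradius_f$ with the combinatorial one $\reebradius_g$. First I would set
\[
\corrR := \{(x,v) \in X \times V : d_X(x,v) < \delta \ \text{and}\ d_M(f(x),g(v)) \leq \delta\}.
\]
The covering hypothesis of Proposition~\ref{prop:graph-reeb-radius} makes the projection onto $X$ surjective, and since $(v,v) \in \corrR$ for every $v \in V$, the projection onto $V$ is surjective as well, so $\corrR$ is a genuine correspondence between the metric spaces $X$ and $V$. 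Its image under the two Reeb quotient maps, $\corrS := \{([x],[v]) : (x,v) \in \corrR\}$, is the correspondence I would use at the top level. It is important to note that $\corrR$ itself (and not $\corrS$) is the correspondence that must witness the decoration comparison, since the filtration decorations $D_f([x]) = (X,\reebradius_f(x,\cdot))$ and $D_g([v]) = (V,\reebradius_g(v,\cdot))$ are $\real$-metric fields over $X$ and $V$, not over the Reeb spaces.

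The technical core is a single estimate: for any two pairs $(p,q),(p',q') \in \corrR$ one wants a bound of the form $|\reebradius_f(p,p') - \reebradius_g(q,q')| \leq (5L+1)\delta + 3\epsilon$. I would prove this by interposing $\reebradius_f(q,q')$, splitting as
\[
|\reebradius_f(p,p') - \reebradius_g(q,q')| \leq |\reebradius_f(p,p') - \reebradius_f(q,q')| + |\reebradius_f(q,q') - \reebradius_g(q,q')|.
\]
The first term is handled by the triangle inequality for the Reeb radius (Proposition~\ref{prop:reeb_radius_properties}), which bounds it by correction terms of the form $\reebradius_f(p,q)$ and $\reebradius_f(q',p')$ together with their reverses (the reverses matter because $\reebradius_f$ is not symmetric); each such term pairs two $\corrR$-related points, hence points within $d_X$-distance $\delta$, so the $(L,\epsilon)$-connectivity of $(X,f)$ controls it, contributing the $2L\delta$ part of the coefficient. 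The second term, now between identical endpoints $q,q' \in V$, is exactly what Proposition~\ref{prop:graph-reeb-radius} bounds, contributing the remaining $(3L+1)\delta$. Assembling the $\delta$-terms yields the coefficient $5L+1$, and collecting the additive $\epsilon$-terms yields the recorded constant.

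Granting this estimate, the three clauses follow mechanically. Since $d_f([x],[x']) = 2\max(\reebradius_f(x,x'),\reebradius_f(x',x))$ and symmetrically for $d_g$, applying the core estimate to $(x,v),(x',v')$ and to their swaps gives $|d_f([x],[x']) - d_g([v],[v'])| \leq 2\big((5L+1)\delta + 3\epsilon\big)$, which is the top-level clause with $r = (5L+1)\delta + 3\epsilon$. For the decoration clause I would verify that $\corrR$ is an $(\delta,(5L+1)\delta+3\epsilon)$-correspondence between $D_f([x])$ and $D_g([v])$: the base-space distortion $|d_X(a,a') - d_V(b,b')|$ is at most $2\delta$ by the triangle inequality for $d_X$ applied to the $\corrR$-related pairs, giving $s=\delta$; and the field values satisfy $|\reebradius_f(x,a) - \reebradius_g(v,b)| \leq (5L+1)\delta + 3\epsilon$ by the core estimate applied to the anchor pair $(x,v)$ and the evaluation pair $(a,b)$, giving $t = (5L+1)\delta + 3\epsilon$.

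I expect the main obstacle to be the core estimate, and within it the interplay between the asymmetry of $\reebradius_f$ and the symmetrization defining $d_f$ and $d_g$: the triangle-inequality splitting must orient the correction terms so that both the forward and backward inequalities survive. The only genuinely delicate bookkeeping is tracking the additive constant, since the $(L,\epsilon)$-connectivity contributes $2\epsilon$ per correction term while Proposition~\ref{prop:graph-reeb-radius} contributes its own $\epsilon$; pinning down the claimed $3\epsilon$ (rather than the constant produced by naively summing these) requires handling the connectivity corrections sharply, for instance by re-running the path-concatenation argument of Proposition~\ref{prop:graph-reeb-radius} directly with $x,x'$ as endpoints and $v,v'$ as representatives, so that the endpoint deviations enter a maximum rather than a sum.
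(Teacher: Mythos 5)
Your proposal follows essentially the same route as the paper: the same correspondence $\corrR$ built from the $\delta$-sample (the paper uses only the condition $d_X(x,v)\le\delta$, but this is immaterial), the same induced correspondence $\corrS$ on the Reeb spaces with $\corrR$ witnessing the decoration clause, and the same core estimate obtained by interposing $\reebradius_f(q,q')$, bounding $|\reebradius_f(p,p')-\reebradius_f(q,q')|$ via the triangle inequality together with $(L,\epsilon)$-connectivity, and bounding $|\reebradius_f(q,q')-\reebradius_g(q,q')|$ via Proposition~\ref{prop:graph-reeb-radius}. Your worry about the additive constant is well founded, but it is not a gap in your argument relative to the paper's: the paper's own proof applies the connectivity hypothesis in the form $\reebradius_f(x,v)\le Ld_X(x,v)+\epsilon$, whereas the stated definition of $(L,\epsilon)$-connectivity gives $+2\epsilon$; taken literally, both your computation and the paper's yield $(5L+1)\delta+5\epsilon$, and the $3\epsilon$ in the statement presupposes the $+\epsilon$ form. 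So you need not re-run the path-concatenation argument to sharpen the constant --- either accept the slightly larger additive term or note that the discrepancy is a constant-tracking slip in the source rather than a missing idea.
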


\begin{proof}
    Let $\corrR$ be the correspondence between $X,V$ given by $(x,v) \in \corrR$ if $d_X(x,v) \leq \delta$. Let $(x,v)$, $(y,w) \in \corrR$. 
    \[\reebradius_f(x,y)-\reebradius_f(v,w) \leq \reebradius_f(x,v) + \reebradius_f(w,y) \leq Ld_X(x,v)+\epsilon+Ld_X(w,y)+\epsilon \leq 2(L\delta+\epsilon). \]
    By using the triangle inequality for $\reebradius_f$, one can see that $|\reebradius_f(v,w)-\reebradius_f(x,y)| \leq 2(L\delta+\epsilon)$. 
    Hence, by Proposition \ref{prop:graph-reeb-radius}, 
    \[|\reebradius_f(x,y)-\reebradius_g(v,w)| \leq (5L+1)\delta+3\epsilon. \]
    This shows that $\corrR$ is a $(\delta,(5L+1)\delta+3\epsilon)$-correspondence between $(X,\reebradius_f(x,\cdot))$ and $(V,\reebradius_g(v,\cdot))$. 
    Then, if $\corrS$ denotes the correspondence between $\reebgraph_f$ and $\reebgraph_g$ given by $([x],[y]) \in \corrS$ if $(x,y) \in \corrR$, then $\corrS$ is a $((5L+1)\delta+3\epsilon, \delta, (5L+1)\delta+3\epsilon)$-correspondence between $(\reebgraph_f,D_f)$ and $(\reebgraph_g,D_g)$.
\end{proof}

\begin{corollary}\label{cor:finite-barcode-approximation}
    Let $f: X \to M$ and $G=(V,E)$ with $g: V \to M$ be as in Proposition \ref{prop:graph-reeb-radius}. Then,
    \[\hat{d}_\mathrm{GH}((\reebgraph_f,B^{\lambda,c}_{f,k}),(\reebgraph_g,B^{\lambda,c}_{g,k})) \leq \max(1,1/\lambda) \big((5L+2)\delta+3\epsilon \big) \]
\end{corollary}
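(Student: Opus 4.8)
The plan is to mirror the proof of Theorem~\ref{thm:barcode-decoration-stability}, but to seed the argument with the filtration-level estimate of Theorem~\ref{thm:finite-approximation} in place of the continuous stability of Theorem~\ref{thm:gh-stability}. Write $A := (5L+1)\delta + 3\epsilon$ and let $\corrR = \{(x,v) \in X \times V : d_X(x,v) \leq \delta\}$ be the correspondence used in the proof of Theorem~\ref{thm:finite-approximation}. That proof already supplies everything needed at the level of filtration decorations: it shows that $\corrR$ is a $(\delta, A)$-correspondence between the $\real$-metric fields $D_f([x]) = (X, \reebradius_f(x,\cdot))$ and $D_g([v]) = (V, \reebradius_g(v,\cdot))$, and its bound $|\reebradius_f(x,y) - \reebradius_g(v,w)| \leq A$ also controls the Reeb quasimetrics, giving $|d_f([x],[x']) - d_g([v],[v'])| \leq 2A$ for all $(x,v),(x',v') \in \corrR$.

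First I would push the $(\delta,A)$-correspondence between the filtration decorations through the simplicial-to-barcode pipeline. Proposition~\ref{prop:simplicial-stability} turns it into a $(2\delta, A)$-homotopy interleaving between $S_{D_f([x])}^{*,*}$ and $S_{D_g([v])}^{*,*}$; applying $PH_k$ yields a $(2\delta, A)$-interleaving of persistence modules; and Proposition~\ref{prop:barcode-stability}, with interleaving parameters $2\delta$ and $A$, gives
\[
\db\big(B^{\lambda,c}_{f,k}([x]), B^{\lambda,c}_{g,k}([v])\big) \leq \max\!\big(2\delta, A/\lambda\big)
\]
for every $(x,v) \in \corrR$.

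Next I would assemble these into a single $(\rho,\rho)$-correspondence between the $\Barc$-metric fields $(\reebgraph_f, B^{\lambda,c}_{f,k})$ and $(\reebgraph_g, B^{\lambda,c}_{g,k})$: the Reeb-space distortion forces $\rho \geq A$, while the displayed bottleneck bound forces $\rho \geq \max(2\delta, A/\lambda)$, so $\corrS$ witnesses the distance $\rho := \max(A, 2\delta, A/\lambda)$. The final step is the arithmetic identity also used at the end of the proof of Theorem~\ref{thm:barcode-decoration-stability}: since $A \geq \delta$ we have $\max(A, 2\delta) \leq A + \delta$, whence
\[
\rho = \max(A, 2\delta, A/\lambda) \leq \max\!\big(A+\delta, (A+\delta)/\lambda\big) = \max(1, 1/\lambda)\big((5L+2)\delta + 3\epsilon\big).
\]

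I do not anticipate a genuine obstacle, since every ingredient is already established; the only care needed is bookkeeping the interleaving parameters $(2\delta, A)$ correctly (the factor of $2$ on $\delta$ coming from the contiguity step of Proposition~\ref{prop:simplicial-stability}) and verifying that this stray $2\delta$ is dominated by $A+\delta$. As noted in the Remark after Definition~\ref{def:barcode_decoration}, the same computation holds verbatim with $\db$ replaced by the interleaving distance if the decorations are treated as persistence modules rather than barcodes.
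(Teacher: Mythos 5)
Your proposal is correct and follows exactly the route the paper intends: the paper's own proof is the one-line citation of Theorem~\ref{thm:finite-approximation}, Proposition~\ref{prop:simplicial-stability}, and Proposition~\ref{prop:barcode-stability}, and you have simply filled in the parameter bookkeeping, correctly tracking the $(2\delta, A)$ interleaving and observing that the extra $\delta$ in $(5L+2)\delta$ is exactly what absorbs the stray $2\delta$ from the contiguity step.
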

\begin{proof}
    Follows from Theorem \ref{thm:finite-approximation} and Proposition \ref{prop:simplicial-stability} and Proposition \ref{prop:barcode-stability}.
\end{proof}

\subsection{Simplifying Graphs by Smoothing}\label{sec:smoothing}

As we have seen above, the Reeb space construction can be adapted to the setting of (combinatorial) graphs by changing continuous paths to edge paths. 
As Reeb smoothing requires considering a certain subspace of $X \times M$, this approach does not directly generalize to graphs---as $M$ does not carry a graph structure, there is no natural way of seeing $G \times M$ as a finite graph. 
However, Theorem \ref{thm:reeb-radius-smoothings} gives a characterization of Reeb smoothings in terms of the Reeb radius, which does make sense in the graph setting.

\begin{definition}[Reeb smoothing of graphs]\label{def:reeb_smoothing_graphs}
    Let $G=(V,E)$ be a finite simple graph and $g: V \to M$, where $M$ is a metric space. We define $\reebgraph_g^\epsilon$ as the finite graph whose set of nodes $V^\epsilon_g$ is the quotient set $V/\sim$, where $v \sim w$ if $f(v)=f(w)$ and $\reebradius_g(v,w) \leq \epsilon$. 
    Denote the quotient map by $\pi_g^\epsilon: V \to V_g^\epsilon$. 
    The set of edges $E_g^\epsilon$ are induced by $E$ in the sense that if $[v,w] \in E$, then $[\pi_g^\epsilon(v),\pi_g^\epsilon(w)] \in E_g^\epsilon$. 
    We metrize $V_g^\epsilon$ by choosing a representative from each class and using $d_g$ distance between them. Let us denote such a metric by $d_g^\epsilon$. 
    If $V$ has a metric structure, we also use the barcode decorations of the selected representatives to get a barcode decoration $V_g^\epsilon$, which we denote by $D_{g^\epsilon,k}^{\lambda,c}$.
\end{definition}

\begin{remark}
    Note that the method of metrization and decoration described in Definition \ref{def:reeb_smoothing_graphs} depends on the representatives we choose, but it saves us from doing new computations, and choosing different representatives changes the metric at most by $\epsilon$.
\end{remark}

\begin{example}
    Figure \ref{fig:Reeb_Smoothings} shows a simple graph endowed with a function (height along the $y$-axis), its associated Reeb graph and two Reeb smoothings of the Reeb graph.
    \begin{figure}
        \centering
        \includegraphics[width = 0.9\textwidth]{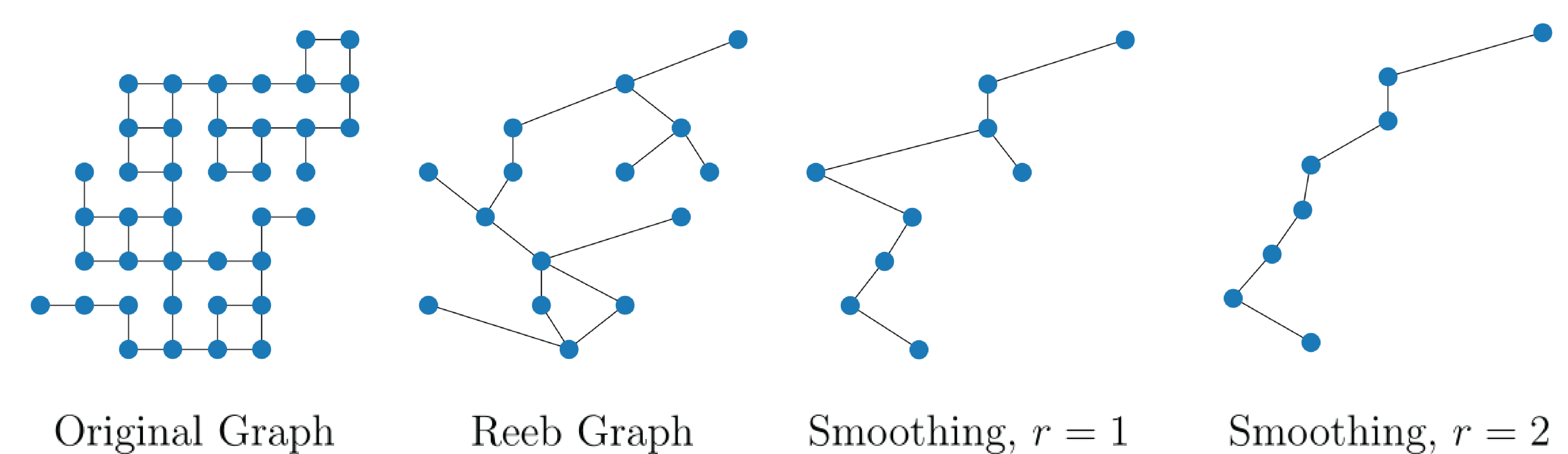}
        \caption{A graph with function $g$ given by node height, its Reeb graph and two smoothings.}
        \label{fig:Reeb_Smoothings}
    \end{figure}
\end{example}

The main advantage of using $\epsilon$-smoothings is that while the metric structure compared to the Reeb graph does not change much, the graph structure can be much simpler even for small $\epsilon$. 
For example, if the original function $g: V \to M$ on the node set takes distinct values for all vertices, then the Reeb graph will have the same graph structure as the original graph. 
However, it is possible that by changing values of $g$ by at most a small $\delta$, there could be many vertices taking the same value under $g$. The $\epsilon$-smoothing will then produce a much simpler graph. 
This is an alternative to the Mapper approach \cite{singh2007topological}, where a similar effect can be achieved by adjusting the bin size and connectivity parameters.

\begin{example}
    Figure \ref{fig:flight_data_smoothing} shows a Reeb smoothing of the \texttt{USAir97} graph~\cite{nr}. The nodes represent 332 US airports, with edges representing direct flights between them. We smooth the graph using PageRank~\cite{brin1998anatomy} as the function $g$ via the operation described in Definition \ref{def:reeb_smoothing_graphs}, and allowing small adjustments in $g$, as described above. The resulting smoothed graph clarifies the hub structure of the airport system. These results are similar to those of \cite{rosen2018homology}, where the graph was simplified using the Mapper algorithm.
\end{example}

   \begin{figure}
        \centering
        \includegraphics[width = 0.65\textwidth]{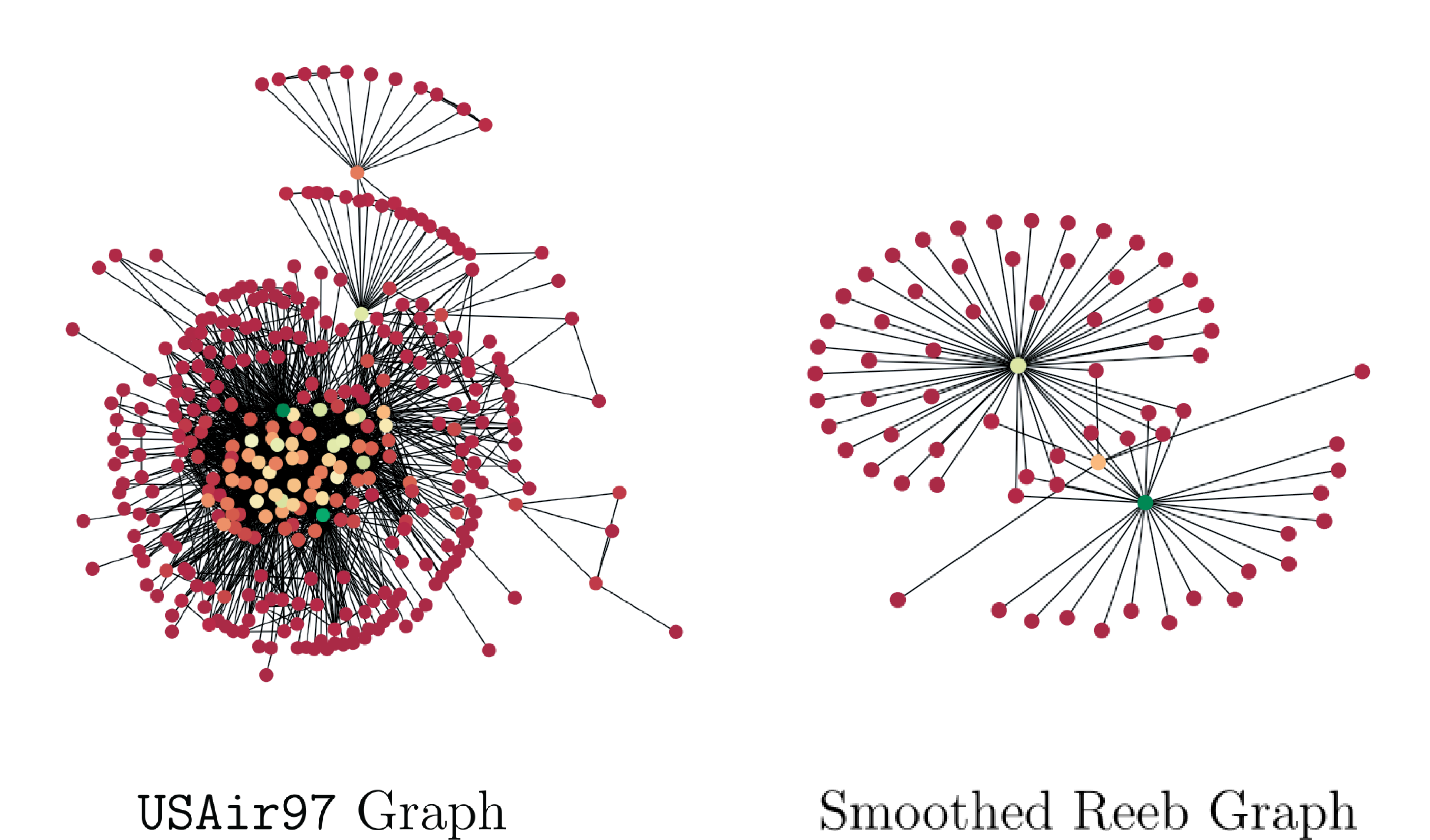}
        \caption{The \texttt{USAir97} graph of airports and connections, endowed 
        with the PageRank function, together with a smoothed Reeb graph illustrating hub structure.}
        \label{fig:flight_data_smoothing}
    \end{figure}

\begin{proposition}\label{prop:smoothing-approximation}
    Let $G=(V,E)$ be a finite simple graph and $g: V \to M$, where $V$ and $M$ are metric spaces. Let $h: V \to M$ such that $d_M(g(v),h(v)) \leq \delta$ for all $v \in V$. Then,
    \[\dgh((\reebgraph_g,B^{\lambda,c}_{g,k}),(\reebgraph_h^\epsilon,D_{h^\epsilon,k}^{\lambda,c}) \leq 2\max(1,1/\lambda)(\delta+\epsilon). \]
\end{proposition}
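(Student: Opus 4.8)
The plan is to produce a single correspondence $\corrS$ between $\reebgraph_g$ and $\reebgraph_h^\epsilon$, induced by the identity on the shared node set $V$, and to verify directly the two requirements of an $(r,s)$-correspondence of $\Barc$-metric fields: a metric-distortion bound between $\reebgraph_g$ and $\reebgraph_h^\epsilon$, and a bottleneck bound between the barcode decorations at corresponding points. Since both $\reebgraph_g = V/\sim$ and $\reebgraph_h^\epsilon = V/\approx$ are quotients of $V$, I would set $([v]_g,[v]_\epsilon) \in \corrS$ for every $v \in V$, where $[v]_g$ is the class of $v$ in $\reebgraph_g$ and $[v]_\epsilon$ its class in $\reebgraph_h^\epsilon$; both projections are then surjective. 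Recalling that $\dgh$ on $\MF[\Barc]$ is the infimum of $\rho$ admitting a $(\rho,\rho)$-correspondence, exhibiting an $(r,s)$-correspondence yields $\dgh \le \max(r,s)$.

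Two elementary facts drive the estimates. First, the hypothesis $d_M(g(v),h(v)) \le \delta$ forces $|\reebradius_g(v,w)-\reebradius_h(v,w)| \le 2\delta$ for all $v,w \in V$: along any edge path $v=v_0,\dots,v_n=w$ one has $d_M(h(v),h(v_i)) \le d_M(g(v),g(v_i))+2\delta$, and minimizing over paths gives the bound in both directions. Second, the graph Reeb radius is symmetric on pairs of equal function value, since reversing an edge path leaves its image unchanged; hence if $v$ is $\approx$-equivalent to the representative $\tilde v$ chosen to metrize and decorate its class (so $h(v)=h(\tilde v)$ and $\reebradius_h(v,\tilde v)\le\epsilon$), then also $\reebradius_h(\tilde v,v)=\reebradius_h(v,\tilde v)\le\epsilon$, whence $d_h([v]_h,[\tilde v]_h)\le 2\epsilon$.

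For the metric distortion I would combine these facts with the triangle inequality for $\reebradius_h$ (Proposition \ref{prop:reeb_radius_properties}). Writing $\tilde v,\tilde w$ for the representatives of $[v]_\epsilon,[w]_\epsilon$ used to define $d_h^\epsilon$, the chain from $\reebradius_g(v,w)$ to $\reebradius_h(v,w)$ to $\reebradius_h(\tilde v,\tilde w)$ gives $|\reebradius_g(v,w)-\reebradius_h(\tilde v,\tilde w)| \le 2\delta+2\epsilon$, and likewise with the arguments reversed; taking maxima and the factor $2$ in the definitions of $d_g,d_h^\epsilon$ yields $|d_g([v]_g,[w]_g)-d_h^\epsilon([v]_\epsilon,[w]_\epsilon)| \le 4(\delta+\epsilon)$, i.e. a metric-distortion bound with $r=2(\delta+\epsilon)$. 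For the barcode distance I would fix $v$ and compare the $\real$-metric fields underlying the decorations, $(V,\reebradius_g(v,\cdot))$ and $(V,\reebradius_h(\tilde v,\cdot))$, via the identity correspondence on $(V,d_V)$: its metric distortion is $0$ and, by the two facts, its function distortion is $\le 2\delta+\epsilon$. Feeding this $(0,2\delta+\epsilon)$-correspondence into Proposition \ref{prop:simplicial-stability}, passing to homology, then applying Proposition \ref{prop:barcode-stability} gives $\db\big(B^{\lambda,c}_{g,k}([v]_g),D_{h^\epsilon,k}^{\lambda,c}([v]_\epsilon)\big) \le (2\delta+\epsilon)/\lambda$. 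Thus $\corrS$ is an $(r,s)$-correspondence with $r=2(\delta+\epsilon)$ and $s=(2\delta+\epsilon)/\lambda$, so $\dgh \le \max(r,s) \le 2\max(1,1/\lambda)(\delta+\epsilon)$.

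The step I expect to be the main obstacle is the bookkeeping forced by the representative-dependent metrization and decoration of $\reebgraph_h^\epsilon$: because $\reebradius$ is only a quasimetric, the passage from $v$ to its class representative $\tilde v$ cannot be controlled by a naive triangle inequality, and it is precisely the symmetry-at-equal-values observation that bounds this passage by $\epsilon$. Once that is in hand, everything else is a routine reuse of the two-parameter stability chain already established.
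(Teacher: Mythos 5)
Your proposal is correct and follows essentially the same route as the paper's proof: the identity-induced correspondence on $V$, the bound $|\reebradius_g(v,\cdot)-\reebradius_h(v,\cdot)|\le 2\delta$ from function proximity, the extra $\epsilon$ from passing to class representatives, and then the $(0,2\delta+\epsilon)$-correspondence between the decorating $\real$-metric fields fed through Propositions \ref{prop:simplicial-stability} and \ref{prop:barcode-stability}. Your intermediate metric-distortion constant ($4(\delta+\epsilon)$ rather than the paper's $2\delta+4\epsilon$) is slightly looser but still yields the stated bound, and you usefully make explicit the symmetry of $\reebradius_h$ on pairs with equal function value, which the paper uses only implicitly.
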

\begin{proof}
    Let $\corrR$ be the correspondence between $\reebgraph_g,\reebgraph_h^\epsilon$ consisting of elements of the form $(\pi_g(v),\pi_h^\epsilon(v))$ for $v \in V$. Let us show that $\corrR$ is an $(\delta+2\epsilon,0,(2\delta+\epsilon)/\lambda)$ correspondence between the corresponding decorated Reeb graphs. 
    
    Let $v,w \in \corrR$. Let $v',w' \in V$ such that $\pi_h^\epsilon(v')=\pi_h^\epsilon(v)$ and $\pi_h^\epsilon(w')=\pi_h^\epsilon(w)$. 
    Note that since $h(v)=h(v')$, we have $d_h([v],[v'])=2\reebradius_h(v,v') \leq 2\epsilon$. Similarly $d_h([w],[w']) \leq 2\epsilon$. Since $d_M(g(\cdot),h(\cdot)) \leq \delta$, we have $|d_g(v,w)-d_h(v,w)| \leq 2\delta$. Hence $|d_g([v],[w])-d_h([v'],[w'])| \leq 2\delta+4\epsilon$.
    For $x \in V$, we have
    \[|\reebradius_g(v,x)-\reebradius_h(v',x)| \leq |\reebradius_g(v,x)-\reebradius_h(v,x)| + |\reebradius_h(v,x)-\reebradius_h(v',x)| \leq 2\delta + \reebradius_h(v,v') \leq 2\delta+\epsilon. \]
    Hence, the identity correspondence is a $(0,2\delta+\epsilon)$ correspondence between $\reebradius_g(v,\cdot): V \to \real$ and $\reebradius_h(v',\cdot): V \to \real$. By Proposition \ref{prop:barcode-stability}, corresponding decorations have bottleneck distance at most $(2\delta+\epsilon)/\lambda$.
\end{proof}

\subsection{Computational Examples}\label{subsec:computation}

\begin{figure}
    \centering
    \includegraphics[width = 0.95\textwidth]{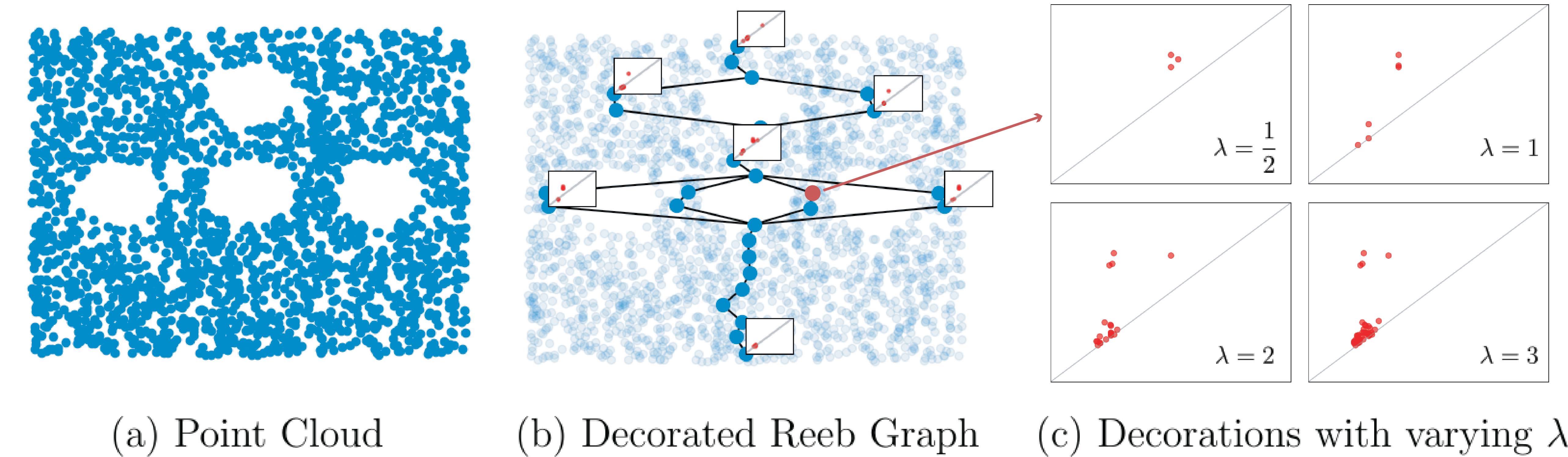}
    \caption{DRGs for synthetic data, illustrating dependence on the $\lambda$ parameter. (a) Synthetic 2D point cloud endowed with function given by height along the $y$-axis. (b) Smoothed Reeb graph, decorated with persistence diagrams $B_{f,1}^{1,0}$, using the notation of Definition \ref{def:barcode_decoration}. Barcodes are displayed for several nodes. (c) Barcode decorations of the form $B_{f,1}^{\lambda,0}$ for the indicated node, with varying $\lambda$.}
    \label{fig:Synthetic_DRG_Example}
\end{figure}

In this subsection, we present some simple examples which illustrate our computational pipeline. For the sake of simplicity, we focus on fields with $M=\real$, so that our examples are all Decorated Reeb Graphs.

\subparagraph*{Synthetic data and the effect of the $\lambda$-parameter.} Figure \ref{fig:Synthetic_DRG_Example} shows the DRG pipeline for a synthetic planar point cloud; the function $f$ is height along the $y$-axis. The Reeb graph in (b) is obtained by fitting a $k$-nearest neighbors graph to the point cloud ($k=10$, with respect to Euclidean distance), then smoothing the resulting (true) Reeb graph by rounding function values and applying the smoothing procedure described in Section \ref{sec:smoothing}. The decorations are degree-1 persistent homology diagrams $B_{f,1}^{\lambda,0}$ (Definition \ref{def:barcode_decoration}), for various values of $\lambda$. We see from this simple example that the decorations pick up the homology of the underlying point cloud from a localized perspective, where homological features that are closer to a node are accentuated in the diagram. Part (c) shows the effect of the $\lambda$ parameter on the decoration for a particular node; as $\lambda$ is increased, homological features appearing at larger Reeb radii from the node become more apparent.

\subparagraph*{Shape data and the effect of the function.} Figure \ref{fig:camel_examples} applies the DRG to a "camel" shape; this is a point cloud sampled from a shape in the classic mesh database from~\cite{sumner2004deformation}. 
The figure illustrates DRGs for two different functions on the shape: the first is height along the $z$-axis and the second is the $p$-eccentricity function, which for $p=2$ and a finite metric space $(X,d)$ is the function $x \mapsto \big(\sum_{y \in X} d(x,y)^2\big)^{1/2}$. 
This function gives a higher value to those points in the metric space that are far "on average" from other points in the space. 
In each case, the Reeb graph is approximated using a Mapper-like construction from \cite{curry2023topologically}, and decorations are of the form $B_{f,1}^{1,0}$, with respect to the appropriate function $f$. 
As expected, the resulting DRG depends heavily on the function, both in the shape of the Reeb graph and in the structure of the decorations.

\begin{figure}
    \centering
    \includegraphics[width = \textwidth]{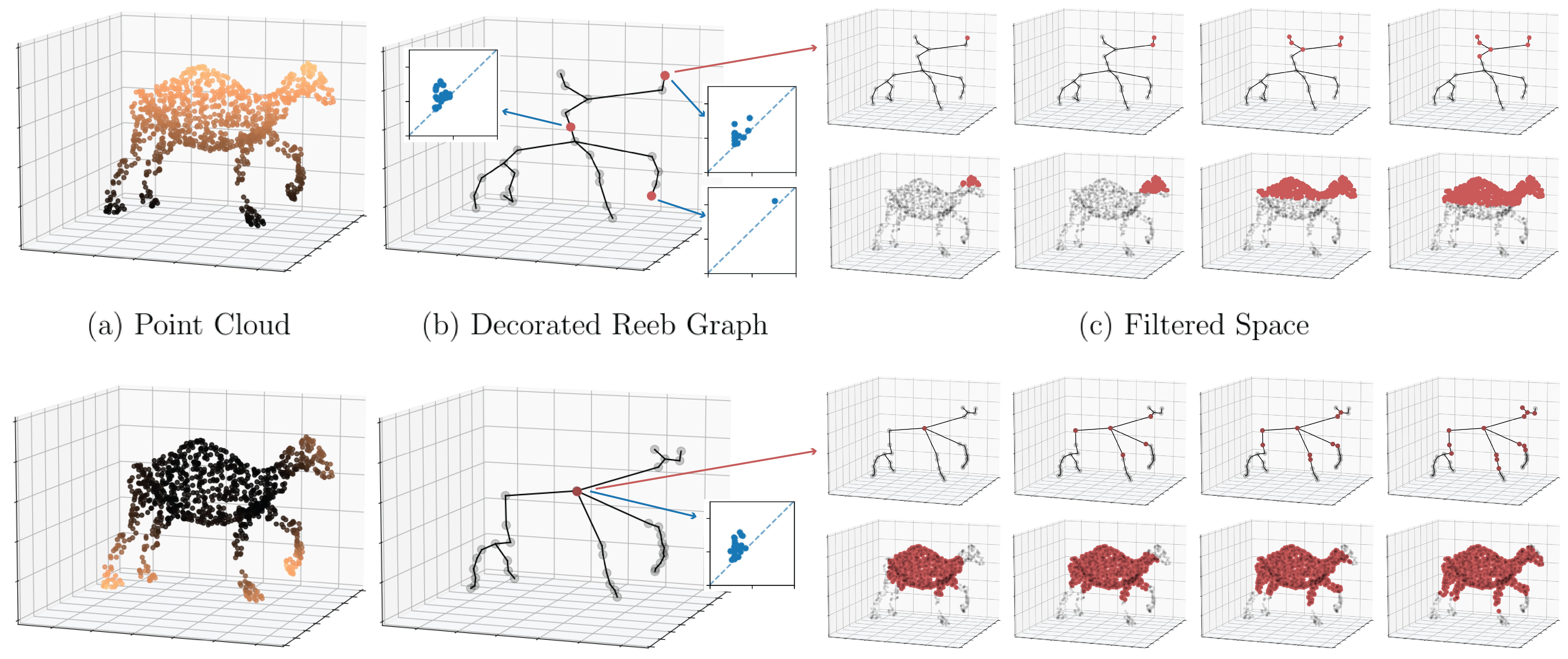}
    \caption{Illustration of DRGs and the effect of the function. {\bf Top Row:} (a) Point cloud sampled from a surface mesh of a camel shape from the Sumner database, endowed with function $f$ given by height along the $z$-axis. (b) The Reeb graph, decorated with persistence diagrams $B_{f,1}^{1,0}$, using the notation of Definition \ref{def:barcode_decoration}. Barcodes are displayed for the indicated nodes of the Reeb graph. (c) Shows the first steps in the filtered space associated to the indicated node. The barcode is computed by taking the Vietoris-Rips persistent homology with radius equal to the Reeb radius. {\bf Bottom Row:} Similar, but with function $g$ given by $p$-eccentricity with $p=2$. For the DRG in (b), all nodes except the indicated one have empty diagrams.}
    \label{fig:camel_examples}
\end{figure}

\subparagraph*{Distances between point clouds.} To illustrate the usage of the DRG pipeline in a metric-based analysis task, we perform a simple point cloud comparison experiment. The data consists of 10 samples each from 4 classes of meshes from the database of \cite{sumner2004deformation}. Each mesh is sampled to form a 3D point cloud, and each point cloud is endowed with a function $f$ given by height along the $z$-axis. 
A decorated Reeb graph is computed for each shape, first using the barcode decorations $B^{2,0}_{f,1}$. 
Each barcode decoration is then vectorized as a persistence image~\cite{adams2017persistence}, to improve computational efficiency, resulting in a Reeb graph with each node decorated by a vector in $\real^{25 \times 25}$. Actual computation of the field Gromov-Hausdorff distance (Definition \ref{def:field_GH}) is intractable, so we use Fused Gromov-Wasserstein (FGW) distance~\cite{vayer2020fused} as a more computationally convenient proxy. This distance is a variant of the Gromov-Wasserstein (GW) distance~\cite{memoli2011gromov}, which can be viewed, roughly, as an $L^p$-relaxation of Gromov-Hausdorff distance. The GW and FGW distances can be approximated efficiently via gradient descent and entropic regularization~\cite{flamary2021pot,peyre2016gromov}, and have been similarly used to compare topological signatures of datasets in several recent papers~\cite{curry2022decorated,curry2023topologically,li2023comparing,li2023flexible}.

Our results are shown in Figure \ref{fig:point_cloud_matching}. We show the pairwise distance matrix between DRGs with respect to FGW distance, as well as a multidimensional scaling plot. Observe that the classes are well-separated by their topological signatures. For comparison, we also compute GW distances between the underlying Reeb graphs; here, the classes are much less distinct, indicating that the decorations recover valuable distinguishing information about the shapes which is lost in the construction of their Reeb graphs. 

We note that this is only an illustrative example. To fully utilize the power of this framework, it is likely that feature optimization is necessary; for example, via graph neural networks, as in~\cite{curry2023topologically}. Further exploration of this technique will be the subject of future work.

\begin{figure}
    \centering
    \includegraphics[width = \textwidth]{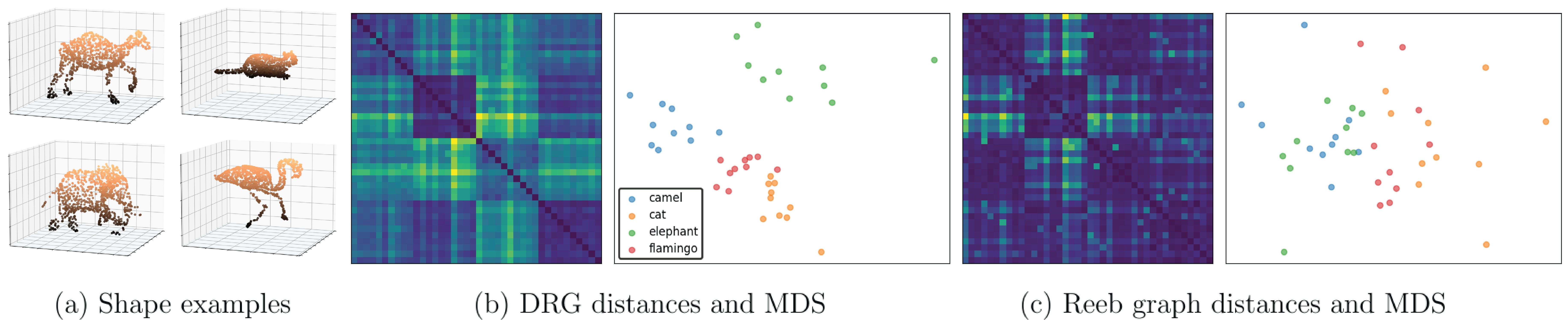}
    \caption{Point cloud matching results. (a) Example point clouds from each shape class. (b) The distance matrix and MDS embedding for decorated Reeb graph representations of the point clouds. (c) The distance matrix and MDS embedding using only Reeb graphs.}
    \label{fig:point_cloud_matching}
\end{figure}

\bibliography{biblio.bib}

 \appendix

 \section{Appendix}

 \begin{proposition}\label{prop:algorithm}
     Algorithm \ref{alg:reeb_radius} computes the correct function $\reebradius_g(x,\cdot):V \to \real$ with computational complexity $O(|E|+|V|\log|V|)$.
 \end{proposition}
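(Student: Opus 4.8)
The plan is to treat Algorithm~\ref{alg:reeb_radius} as a minimax (bottleneck) variant of Dijkstra's algorithm with \emph{node} weights $c(v) := d_M(g(x),g(v))$, so that the cost of an edge path $(v_0,\dots,v_n)$ from $x$ is exactly $\max_i c(v_i)$ and the final value $\mathrm{val}(v)$ stored in $\reebradius_g(x,v)$ should equal the true Reeb radius. Since the input graph is connected, every node is inserted into $Q$ and extracted exactly once. The key structural feature—and the source of all the subtlety—is that, unlike the additive shortest-path setting, no \emph{decrease-key} is ever performed: once $\mathrm{val}(w)$ is set from $\infty$ while processing a neighbor, it is never changed again. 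I would separate correctness from the complexity bound.

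For correctness I would first establish a monotonicity invariant: if nodes are extracted in the order $v_1, v_2, \dots$, then $\mathrm{val}(v_1) \le \mathrm{val}(v_2) \le \cdots$. This follows by induction, since when $v_{k-1}$ is extracted it is the minimum of the current queue, every surviving queue element therefore has value at least $\mathrm{val}(v_{k-1})$, and every newly discovered neighbor $w$ is assigned $\max(\mathrm{val}(v_{k-1}), c(w)) \ge \mathrm{val}(v_{k-1})$; hence the next extracted minimum cannot be smaller. Alongside this I would record the easy ``realized by a path'' direction: by induction on discovery order, whenever the algorithm assigns a finite $\mathrm{val}(w)$ it also exhibits an actual edge path from $x$ to $w$ of cost at most $\mathrm{val}(w)$, so the true $\reebradius_g(x,w) \le \mathrm{val}(w)$.

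The heart of the argument is the reverse inequality. Fixing a target $t$ and a minimizing edge path $P = (x = u_0, \dots, u_m = t)$ of cost $\theta = \reebradius_g(x,t)$, I would show by induction along $P$ that $\mathrm{val}(u_i) \le \theta$ for every $i$; taking $i = m$ then gives $\mathrm{val}(t) \le \theta$, which together with the previous paragraph yields exact correctness. The base case $u_0 = x$ is immediate. For the step, I consider the moment $u_{i-1}$ is extracted: if $u_i$ is still undiscovered it is assigned $\max(\mathrm{val}(u_{i-1}), c(u_i)) \le \theta$, since $c(u_i) \le \theta$ and inductively $\mathrm{val}(u_{i-1}) \le \theta$; if $u_i$ was \emph{already} discovered, it was discovered by some node $p$ extracted strictly before $u_{i-1}$, so monotonicity gives $\mathrm{val}(p) \le \mathrm{val}(u_{i-1}) \le \theta$ and again $\mathrm{val}(u_i) = \max(\mathrm{val}(p), c(u_i)) \le \theta$. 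I expect this ``already discovered'' case to be the main obstacle, precisely because it is where the absence of updates could \emph{a priori} cause an error: the resolution is that monotonicity forces the earlier discoverer $p$ to carry a value no larger than $\theta$—exactly the property that fails for additive edge costs but holds for the minimax cost.

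Finally, for the complexity I would observe that the while loop performs exactly $|V|$ extract-min operations and at most $|V|$ insertions (each node enters $Q$ once), while the neighbor scans visit each edge a bounded number of times, contributing $O(|E|)$ work with $O(1)$ evaluations of $d_M$ assumed. Because there are no decrease-key calls, a standard binary heap already suffices: the $\log|V|$ factor multiplies only the $O(|V|)$ queue operations, so the total cost is $O(|E| + |V|\log|V|)$.
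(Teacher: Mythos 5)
Your proof is correct and follows essentially the same route as the paper's: both arguments rest on the monotonicity of the assigned values in extraction order (the paper packages this as the claim that the discoverer $p(w)$ of $w$ satisfies $R(p(w)) \le R(v)$ for every neighbor $v$ of $w$), propagate the bound along an optimal edge path to get $\mathrm{val}(t) \le \reebradius_g(x,t)$, and obtain the reverse inequality from the path realized by the predecessor chain. Your observation that the absence of decrease-key operations means a binary heap already achieves $O(|E|+|V|\log|V|)$ is a minor refinement of the paper's Fibonacci-heap accounting, but the substance is identical.
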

 \begin{proof}
     Our proof is inspired by the proof of correctness and worst case complexity of Dijkstra's algorithm in \cite[Theorem 7.3 and 7.4]{korte2012combinatorial}.
    
     Assume we introduce a dictionary $p$ into the algorithm such that in the conditional statement, after inserting $w$ into $Q$,  we make the assignment $p(w)=v$. We call $p(w)$ the predecessor of $w$.

     If a node $w$ enters into $Q$,  it will leave $Q$ at one point and never enter into $Q$ again since $\reebradius_g(x,w)$ is set to a finite value. At the point where $w$ leaves $Q$, all of the neighbors of $w$ were either already inserted into $Q$ previously, or are inserted at the current iteration of the while loop. Since $G$ is connected, this implies that every node enters into and leaves $Q$ exactly once. Note that $p(w),p(p(w)),\cdots$ are removed from $Q$ before $w$ is inserted into $Q$, hence they are all distinct from $w$. This implies that $p(w),p(p(w)),\cdots$ does not have repeated elements, and terminates at $x$, as $x$ is the only node without a predecessor.

     For $v \in V$, let $R(v)$ denote the value of $\reebradius_g(x,v)$ set by the algorithm, and $\reebradius_g(x,v)$ denote the correct value. We need to show $R(v)=\reebradius_g(x,v)$. One can inductively see that $R(v) =\max_i\big(d_M(g(x),g(p^{(i)}(x)))\big)$, where $p^{(i)}$ denotes applying $p$ $i$-times. Hence $\reebradius_g(x,v) \leq R(v)$. To show the inverse inequality, we use the following claim.
    
     \textbf{Claim} If $[v,w] \in E$, then $R(p(w)) \leq R(v)$.

     First, note that $p(w)$ is the first neighbor of $w$ removed from $Q$, since $w$ enters into $Q$ exactly at that point. At the iteration $p(w)$ is removed from $Q$, $R$ takes the minimal value at $p(w)$ in $Q$, and inductively one can see that any value that $R$ takes on $Q$ at the same or later iterations is greater than or equal to $R(p(w))$. Either $v=p(w)$ or it is removed later from $Q$. In both cases, we have $R(p(w)) \leq R(v)$, proving the claim.

     Now, take the edge path $x=v_0,\dots,v_n=v$ realizing $\reebradius_g(x,v)$. Using the claim we get
     \begin{equation*}
         \begin{split}
             R(v) &= \max\big(R(p(v)), d_M(g(x),g(v)) \big) \leq \max \big(R(v_{n-1}), d_M(g(x),g(v_n)) \big) \\
                  &\leq \max(R(v_{n-2}), d_M(g(x),g(v_{n-1}),d_M(g(x),g(v_n))) \leq \dots \\
                  &\leq \max_i \big( d_M(g(x),g(v_i) \big) = \reebradius_g(x,v).
         \end{split}
     \end{equation*}
     This shows the correctness of the algorithm.

     For the complexity analysis, note that there are exactly $|V|$ iterations of the while loop. Assuming the function $\reebradius_g(x,\cdot)$ over $Q$ is implemented using a Fibonacci heap, at the iteration of the while loop where $v$ is removed, $O(log(|V|))$ steps are needed for getting and removing the argmin, and $O(deg(v))$ operations are needed in the for loop, as every operation inside the for loop is $O(1)$. Adding all these, we get the total complexity of $O(|E|+|V|log(|V|))$.
 \end{proof}

\end{document}